\renewcommand{\nomgroup}[1]{%
\ifthenelse{\equal{#1}{I}}{\item[\textbf{Indices}]}{%
\ifthenelse{\equal{#1}{A}}{\item[\textbf{Abbreviations}]}{%
\ifthenelse{\equal{#1}{V}}{\item[\textbf{Variables}]}{%
\ifthenelse{\equal{#1}{P}}{\item[\textbf{Parameters and Constants}]}{%
}
}
}
}
}
\newcommand{\beq}{\begin{equation}}
\newcommand{\eeq}{\end{equation}}
\newcommand{\beqn}{\begin{eqnarray}}
\newcommand{\eeqn}{\end{eqnarray}}
\newcommand{\beqno}{\begin{eqnarray*}}
\newcommand{\eeqno}{\end{eqnarray*}}
\newcommand{\bma}{\begin{displaymath}}
\newcommand{\ema}{\end{displaymath}}
\newcommand{\bnu}{\begin{enumerate}}
\newcommand{\enu}{\end{enumerate}}
\newcommand{\bce}{\begin{center}}
\newcommand{\ece}{\end{center}}
\newcommand{\btb}{\begin{tabular}}
\newcommand{\etb}{\end{tabular}}
\newtheorem{theorem}{Theorem}[section]
\newtheorem{proposition}[theorem]{Proposition}
\def\ba{{\mathbf{a}}}
\def\by{{\mathbf{y}}}
\def\bx{{\mathbf{x}}}
\def\bz{{\mathbf{z}}}
\def\be{{\mathbf{e}}}
\def\bq{{\mathbf{q}}}
\def\bp{{\mathbf{p}}}
\def\bs{{\mathbf{s}}}
\def\blambda{{\mathbf{\lambda}}}
\begin{document}



\title{Robust Dynamic Edge Service Placement Under Spatio-Temporal Correlated Demand Uncertainty}
\vspace{-0.65cm}
\author{\IEEEauthorblockN{Jiaming Cheng,~\IEEEmembership{Student Member,~IEEE}, Duong~Thuy~Anh~Nguyen,~\IEEEmembership{Student Member,~IEEE}, \\ and Duong Tung Nguyen,~\IEEEmembership{Member,~IEEE} } \vspace{-0.4cm}
\thanks{The authors are with the School of Electrical, Computer and Energy Engineering, Arizona State University, Tempe, AZ, USA. Email: \{jiaming, dtnguy52,duongnt\}@asu.edu.} \vspace{-0.4cm}
}

\maketitle

\begin{abstract}
Edge computing allows Service Providers (SPs) to enhance user experience by placing their services closer to the network edge. Determining the optimal provisioning of edge resources to meet the varying and uncertain demand cost-effectively is a critical task for SPs. This paper introduces a novel two-stage multi-period robust model for edge service placement and workload allocation, aiming to minimize the SP's operating costs while ensuring service quality. The salient feature of this model lies in its ability to enable SPs to utilize dynamic service placement and leverage spatio-temporal correlation in demand uncertainties to mitigate the inherent conservatism of robust solutions. In our model, resource reservation is optimized in the initial stage, preemptively, before the actual demand is disclosed, whereas dynamic service placement and workload allocation are determined in the subsequent stage, following the revelation of uncertainties. To address the challenges posed by integer recourse variables in the second stage of the resulting tri-level adjustable robust optimization problem, we propose a novel iterative, decomposition-based approach, ensuring finite convergence to an exact optimal solution. Extensive numerical results are provided to demonstrate the efficacy of the proposed model and approach.
\end{abstract}

\begin{IEEEkeywords}
Edge computing, adaptive robust optimization, correlated uncertainties, dynamic service placement. 
\end{IEEEkeywords}

\printnomenclature

\section{Introduction}

Edge Computing (EC) has emerged as a pivotal computing paradigm to improve user experiences and support a diverse range of Internet of Things (IoT) applications \cite{wshi16}. The increasing reliance on edge resources and the strict service requirements they must meet present substantial challenges for modern network operations. These challenges are further exacerbated by various system uncertainties, which can lead to suboptimal solutions when using deterministic optimization models if actual parameters diverge from their predicted values, ultimately affecting performance and system reliability. 

This paper investigates the edge service placement and resource management problem, focusing on the complexity introduced by 
the heterogeneity of geo-distributed edge nodes (ENs). Resource costs among ENs vary significantly due to factors such as hardware specifications, electricity costs, location, reliability, reputation, and ownership. Consequently, even though certain ENs may be closer to end-users, they are often bypassed for service placement because of their higher costs and resource prices, underscoring the complexities of strategic location selection. Also, efficient resource management and workload allocation are critical for service providers (SPs). Decisions regarding edge resource provisioning are often made amidst uncertainties about future demand. Neglecting these uncertainties can lead to over-provisioning or under-provisioning of resources, resulting in wastage and increased costs or degraded service quality and unmet demand, respectively.

To address the aforementioned challenges, we present a novel two-stage multi-period robust model tailored specifically for edge service placement and workload allocation, which aims to minimize the operating costs for SPs while ensuring high service quality. Service placement and resource procurement are fundamental for optimizing network performance and enhancing user satisfaction. Specifically, strategic service placement is essential for establishing proximity to users, significantly influencing user experience and service efficiency. Resource procurement, or sizing, decisions define the 
resource capacities to serve areas effectively. 
Our proposed model aims to optimize service placement and sizing decisions under \textit{spatio-temporal demand uncertainty},  minimizing costs while improving Quality of Service (QoS) in terms of 
latency and unmet demand. 


Numerous efforts in optimizing edge systems under uncertainty have emerged, primarily utilizing stochastic optimization (SO) and robust optimization (RO). The SO approach typically requires precise knowledge of the probability distribution of uncertainty, which can be challenging. 
Conversely, 
 RO adopts predefined uncertainty sets, 
enhancing computational tractability \cite{RO}. However, solutions derived from traditional 
RO models can be overly conservative, risking significant economic losses or suboptimal 
performance during actual operation. To address this, a two-stage RO framework was proposed in \cite{Duong_iot,resilientEC}, incorporating ``\textit{here-and-now}" decisions in the first stage, made before the uncertainty is revealed, and \textit{``wait-and-see}" recourse decisions in the second stage. 
This approach leverages newly revealed information to obtain less conservative 
solutions. 
However, most existing literature 
focuses on utilizing \textit{static} uncertainty sets (e.g., static polyhedral sets) which are predefined or specified with predetermined parameters to achieve desired robustness levels. Such an approach neglects the systematic representation and explicit consideration of temporal and spatial correlations.  
Incorporating these factors can significantly enhance the accuracy of prediction results \cite{yu2021short,zhang2021cloudlstm}.

This motivates us to explore an alternative method to accommodate inherent temporal and spatial dynamics of uncertainties,  
which can substantially reduce the size of the uncertainty set, thereby mitigating the conservatism in the solution. This reduction is possible due to the predictable relationships indicated by correlations between uncertain parameters. Also, dynamic uncertainty sets provide flexibility by allowing variations over time or in response to different scenarios. 
Capturing interdependencies and joint behavior among uncertainties allows decision-makers to gain insights into demand patterns, thereby alleviating the inherent conservatism in robust solutions. However, developing frameworks to model and integrate the intricate correlation structure of demand uncertainty remains a formidable research challenge and an crucial research area.


Another limitation of the existing 
robust edge service placement models is their assumption that service placement and resource procurement decisions are fixed and unalterable during the operational stage. This assumption stems from the model's design, which determines these decisions solely in the first stage before uncertainty is disclosed. \textit{To enhance the model's effectiveness, it is imperative to develop a more sophisticated framework that incorporates the dynamic nature of service placement decisions across different time slots.} 
It is neccessary to provide SPs with the flexibility to dynamically 
adjust service placement and resource allocation decisions to accommodate the time-varying resource demands from different areas.

To this end, this study formulates a novel two-stage multi-period dynamic service placement and sizing problem, 
where an SP strategically reserves resources before gaining knowledge of the actual demand. With the first-stage decisions established, the SP then determines service placement, resource adjustment, 
and resource allocation decisions during the operational stage.
Unlike our previous work \cite{Duong_iot,resilientEC}, which presented traditional two-stage robust models, this paper introduces a two-stage multi-period robust model capable of capturing the spatiotemporal correlation of uncertainties, considerably reducing costs for the SP. Furthermore, 
\textit{incorporating dynamic service placement 
introduces binary recourse variables in the second-stage problem, substantially elevating the optimization problem's complexity.} 
The non-convex nature of the second-stage problem, due to the presence of binary variables, renders traditional decomposition-based methods, which iteratively generate variables and constraints to 
refine the solution space, such as those in \cite{CCGARO,Duong_iot,resilientEC}, ineffective.
To bridge this gap, we develop a novel decomposition algorithm that optimally solves this trilevel model. 
Our contributions are threefold: 



\begin{enumerate}
    \item \textbf{\textit{Modeling:}} We propose a novel two-stage, multi-period robust model that optimizes resource reservation in the first stage and facilitates dynamic edge service placement and resource adjustments during operation. Furthermore, we introduce a data-driven approach to model dynamic uncertainty sets by integrating temporal and spatial dynamics of demand fluctuations. This method inherently offers a superior trade-off between robustness and profitability for SPs, emphasizing the impact and advantages of dynamic service placement decisions on network performance.
    \item \textbf{\textit{Solution Approach:}} The presence of integer recourse variables in the second stage poses significant challenges, limiting the direct application of existing algorithms \cite{CCGARO,Duong_iot,resilientEC}. To overcome this, we develop a novel iterative, decomposition-based approach, called \textit{ROD}, specifically designed for solving the resulting Adjustable Robust Optimization (ARO) problem with integer recourse. This method efficiently refines robust solutions, supported by mathematical proofs confirming its optimality and demonstrating convergence within a finite number of iterations.
    \item \textbf{\textit{Simulation:}} We conduct extensive numerical simulations to demonstrate the superior performance of our proposed model over benchmarks and previous models presented in \cite{Duong_iot}. Sensitivity analyses explore the impacts of critical system parameters on the optimal solution. Run-time analysis on large-scale systems validates the effectiveness of the proposed algorithm in practical scenarios.
\end{enumerate}
The rest of this paper is organized as follows: Section \ref{related_work} discusses the related work. Section \ref{modelfor} presents the system model and problem formulation. In Section \ref{solution}, we describe the proposed solution approach. Simulation results are presented in Section \ref{Numerical_results}, followed by the conclusions in Section \ref{summary}.

\section{Related work}
\label{related_work}
Recent works have explored the 
edge service placement problem from diverse perspectives. Mechtri \textit{el at.} \cite{static16} proposed a heuristic algorithm for optimal edge server selection to maximize revenue while meeting service requirements. Yang \textit{el at.} \cite{Yang2018} focused on service placement and resource allocation in edge cloud networks, presenting an incremental allocation algorithm to minimize operational costs and ensure low-latency requirements. Reference \cite{Lcai20} addressed time-scale computing resource management for edge servers, addressing wholesale and buyback schemes, and cloud-related pricing strategies. 
In \cite{Niyato20}, a dynamic service placement model with reliability awareness was introduced to minimize total operating costs while maximizing the number of admitted services.
Zhang \textit{el at.} \cite{zhang13} presented a model predictive control-based algorithm for multi-period dynamic service placement to minimize operating costs.  A multiple edge service placement problem was proposed in \cite{He19} to maximize the total reward. \textit{However, these works have predominantly concentrated on deterministic formulations, without considering system uncertainties.} 

There is an expanding literature on edge service placement under uncertainty. Ouyang \textit{el at.} \cite{Ouyang18} conceptualized the dynamic service placement problem as a contextual multi-armed bandit problem and proposed a Thompson-sampling-based online learning algorithm. This algorithm selects edge nodes for task offloading, minimizing total delay and service migration costs. Cheng \textit{el at.} \cite{Cheng_bandit} explored a bandit-based online posted pricing mechanism, utilizing a bandit learning algorithm to maximize the profit of the edge-cloud platform under demand uncertainty. Reference \cite{jia18} investigated the dynamic placement of virtualized network function (VNF) service chains across geographically distributed cloudas. The aim was to serve flows between dispersed source and destination pairs while minimizing the total operating cost across the entire system. \textit{ These studies leverage online algorithms for dynamic service placement to optimize system performance under uncertainty.}


Another prevalent method for addressing uncertainty is SO. Reference \cite{niyato12} introduced a stochastic model for cloud resource provisioning to minimize the total provisioning cost. 
Mireslami \textit{el at.} \cite{Far21} formulated a stochastic cloud resource allocation problem, incorporating dynamic user demand to minimize the total deployment cost. 
Reference \cite{Grosu20} casts the energy-aware service placement problem in edge-cloud networks as a multi-stage stochastic program that aims to maximize the QoS while respecting the limited energy budget of edge servers.

Recently, RO has gained significant attention 
as an alternative methodology for optimizing edge systems under uncertainty. 
The robust Service Function Chain (SFC) placement problem under demand uncertainty was formulated as a quadratic integer program in \cite{jli21} to maximize the total profit. Nguyen \textit{el at.} \cite{nguyen20} employed RO to formulate a deadline-aware co-located and geo-distributed SFC orchestration with demand uncertainty and developed both exact and approximate algorithms for solving the problem. Reference \cite{Duong_iot}  introduced a two-stage robust model to help the SP optimize edge service placement and sizing. In \cite{resilientEC}, the authors formulated a resilience-aware edge service placement and workload allocation, considering both resource demand and node failure uncertainties. Chen \textit{el at.} \cite{yche21} addressed a distributionally robust computation-intensive task offloading problem that aims to minimize the expected latency under the worst-case probability distribution. Wei \textit{el at.} \cite{fwei23} introduced a hybrid data-driven framework leveraging prediction intervals and robust optimization to reduce cost-waste during resource provisioning. Cheng \textit{et al.} \cite{cheng_ddu} proposed RO models with a decision-dependent uncertainty set to improve user experience considering uncertain link delay.

However, \textit{the existing literature has overlooked the issues of spatiotemporal demand correlation and dynamic placement for edge service provisioning}. Our work addresses these gaps by proposing a novel robust model and solution approaches. 

\section{System Model and Problem Formulation}
\label{modelfor}
 
\subsection{System Model}
\label{model}

\begin{table}[t!]
\newcommand{\tabincell}[2]{\begin{tabular}{@{}#1@{}}#2\end{tabular}}  
\centering
\caption{Notation}
\begin{tabular}{|c|l|}
\hline
\textbf{Notation} & \qquad\qquad\qquad\quad \textbf{Meaning} \\ \hline
\multicolumn{2}{|c|}{\textbf{Sets and indices}}                                       
\\ 
\hline
\!\!\!$t$, $T$, $\mathcal{T}$ & \!\! Indices, number and set of time periods\\
\hline
\!\!\!$j$,$J$,$\mathcal{J}$ & \!\! Indices, number, and set of ENs\\ 
\hline
\!\!\!$i$,$I$,$\mathcal{I}$ & \!\! Indices, number, and set of access points\\ 
\hline
\!\!\!$m$ & Index of source EN for service downloading \\
\hline
\!\!\!$\mathcal{D}_1$ & \!\! Static uncertainty set (SUS)\\
\hline
\!\!\!$\mathcal{D}_2$ & \!\! Dynamic uncertainty set (DUS)\\
\hline
\multicolumn{2}{|c|}{\textbf{Parameters}}                                       \\
\hline
\!\!\! $d_{i,j}(d_{i,0})$ \!\!\! & \!\!  Network delay between AP $i$ and EN $j$ (cloud)\\
\hline
\!\!\!$\rho$ \!\!\! & \!\!  Delay penalty parameter  \\
\hline
\!\!\!$\delta$ \!\!\! & \!\!  Length of a time slot  \\
\hline
\!\!\!$H_{i,j} (H_{i,0})$ \!\!\!\!\!\! & \!\! Number of network hops between AP $i$ and EN $j$ (cloud)\\
\hline
\!\!\!$b$ \!\!\! & \!\!  Data size of one request\\
\hline
\!\!\!$C_{j}$   & \!\! Resource capacity of EN $j$\\
\hline
\!\!\!$h_{m,j} (h_{0,j})$ \!\!\!\!\!\! & \!\! Service download cost from EN $m$ to EN $j$\\
\hline
\!\!\!$f_{j}^{t}$ & \!\! Service placement cost at EN $j$ at time $t$\\
\hline
\!\!\!$f_{j,t}^{\sf s}$ & \!\! Storage cost at EN $j$ at time $t$\\
\hline
\!\!\!$p_{j}^{t} (p_{0}^{t})$ & \!\! Reserved resource unit price at EN $j$ (cloud) at time $t$ \!\!\!\!\!\!\\
\hline
\!\!\!$e_{j}^{t} (e_{0}^{t})$ & \!\! ``\textit{Buy-more}" resource unit price at EN $j$ (cloud) at time $t$ \!\!\!\!\!\\
\hline
\!\!\!$a_{j}^{t}$ & \!\! ``\textit{Sell-back}" resource unit price at EN $j$ (cloud) at time $t$\!\!\!\!\\
\hline
\!\!\!$w$   & \!\! Average resource demand per  request \\
\hline
\!\!\!$\beta$ & \!\! Unit price of network bandwidth\\
\hline
\!\!\!$\lambda_{i}^{t}$ & \!\! Demand at AP $i$ at time $t$\\
\hline
\!\!\!$\varsigma_{j}^{t}$ & \!\! Auxiliary parameter: $f_j^{t} (1 - z_j^{t-1}) + f_{j,t}^{\sf s}$ \\
\hline
\!\!\!$c_{i,0}^{t}$ & \!\! Auxiliary parameter: $\rho d_{i,0} + \beta b H_{i,0}$\\
\hline
\!\!\!$c_{i,j}^{t}$ & \!\! Auxiliary parameter $\rho d_{i,j} + \beta b H_{i,j}$\\
\hline
\multicolumn{2}{|c|}{\textbf{Variables}} \\ 
\hline
\!\!\!$s_{j}^{t} (s_{0}^{t})$ \!\!\! & \!\! \tabincell{l}{Amount of reserved resource at EN $j$ (cloud) at time $t$} \\
\hline
\!\!\!$z_{j}^{t}$ & \!\! $\{0,1\}$, ``$1$" if the service is placed at EN $j$ at time $t$\\
\hline
\!\!\! $q_{m,j}^{t} (q_{0,j}^{t})$ \!\!\! & \!\!   \tabincell{l}{$\{0,1\}$, ``$1$" if the service is downloaded from EN $m$ \\ (cloud) to EN $j$ at time $t$ and ``$0$", otherwise} \\
\hline
\!\!\! $y_j^{B,t} (y_0^{B,t})$ \!\!\!\! & \!\!  Amount of ``\textit{buy-more}''  resources at EN $j$ (cloud) at time $t$ \!\!\!\!\!\!\! \\
\hline
\!\!\! $y_j^{S,t} (y_0^{S,t})$ \!\!\! & \!\!  Amount of  ``\textit{sell-back}''  resources at EN $j$ (cloud) at time $t$ \!\!\! \!\!\\
\hline
\!\!\! $x_{i,j}^{t} (x_{i,0}^{t})$ \!\!\! & \!\!  Workload allocated to EN $j$ (cloud) from AP $i$ \\
\hline 
\end{tabular} \label{notation}
\end{table}

This paper addresses the joint service placement and resource procurement problem for an SP serving users across different areas, each represented by an access point (AP). The set of APs is denoted by $\mathcal{I}$, with a total of $I$ APs. To enhance service quality, the SP can procure edge resources from an EC platform that manages a set  $\mathcal{J}$ of $J$ geographically distributed, heterogeneous ENs. Let $i$ and $j$ signify the indices for APs and ENs, respectively.
Traditionally,  in the absence of EC, user requests are routed to remote cloud servers for processing. Leveraging EC allows these requests to be directly served at the edge,
effectively mitigating network delays between users and computing nodes. 
The network delay between AP $i$ and EN $j$ is represented as $d_{i,j}$, while the network delay between AP $i$ and the cloud is denoted bywe $d_{i,0}$.

A critical task for the SP is to jointly optimize 
edge resource procurement, service placement, and 
workload allocation. 
This is a complex task due to the heterogeneity of the ENs concerning resource capacity, geographical location, and pricing. We adopt a time-slotted model with $T$ equal-length periods, where $t$ denotes the time index and $\mathcal{T}$ for the set of time periods. Before the operational stage, the SP must ascertain the quantity of reserved resources to commit to procurement. The unit prices of reserved resources at the cloud and EN $j$ at time $t$ are $p_0^t$ and $p_j^t$, respectively. Let $s_0^t$ and $s_j^t$ denote the amounts of reserved resources at the cloud and EN $j$, respectively, at time $t$.

Due to the uncertain and time-varying demand, the committed resource reservation may significantly differ from the actual demand.
We assume that in the operational stage, the SP can adjust computing resources
based on actual demand.
Specifically, the SP has the flexibility to procure additional resources in the event of a deficiency and is also allowed to sell surplus resources back to the platform. To prevent arbitrage, the platform can impose constraints ensuring that the  ``\textit{sell-back}" price is lower than the reserve price and the ``\textit{buy-more}" price is higher than the reserve price \cite{Lcai20}. Define $a_o^t$ and $a_j^t$ as the ``\textit{sell-back}" prices at the cloud and EN $j$ at time $t$, respectively. Also, $e_0^t$ and $e_j^t$ denote the corresponding ``\textit{buy-more}" prices at the cloud and EN $j$ at time $t$. We use $y_0^{B,t}$ and $y_j^{B,t}$ to indicate the amount of ``\textit{buy-more}" resources at the cloud and EN $j$ at time $t$, while $y_0^{S,t}$ and $y_j^{S,t}$ represent the amounts of ``\textit{sell-back}" resources at the cloud and EN $j$ at time $t$. 
Denote the resource trading vector by $\by = (y^{B},y^{S}) = (y_{j,t}^{B},y_{0,t}^{B},y_{j,t}^{S},y_{0,t}^{S})$, where superscripts $B$ and $S$ represent ``\textit{buy-more}" and ``\textit{sell-back}" decisions, respectively. While the model focuses on a single resource type for simplicity, it can be easily extended to include multiple resource types.
The dynamic resource trading model is illustrated in Fig.~\ref{fig: multi_period_model}.


\begin{figure}[h!]
\vspace{-0.3cm}
\centering
\includegraphics[width=0.4\textwidth,height=0.1\textheight]{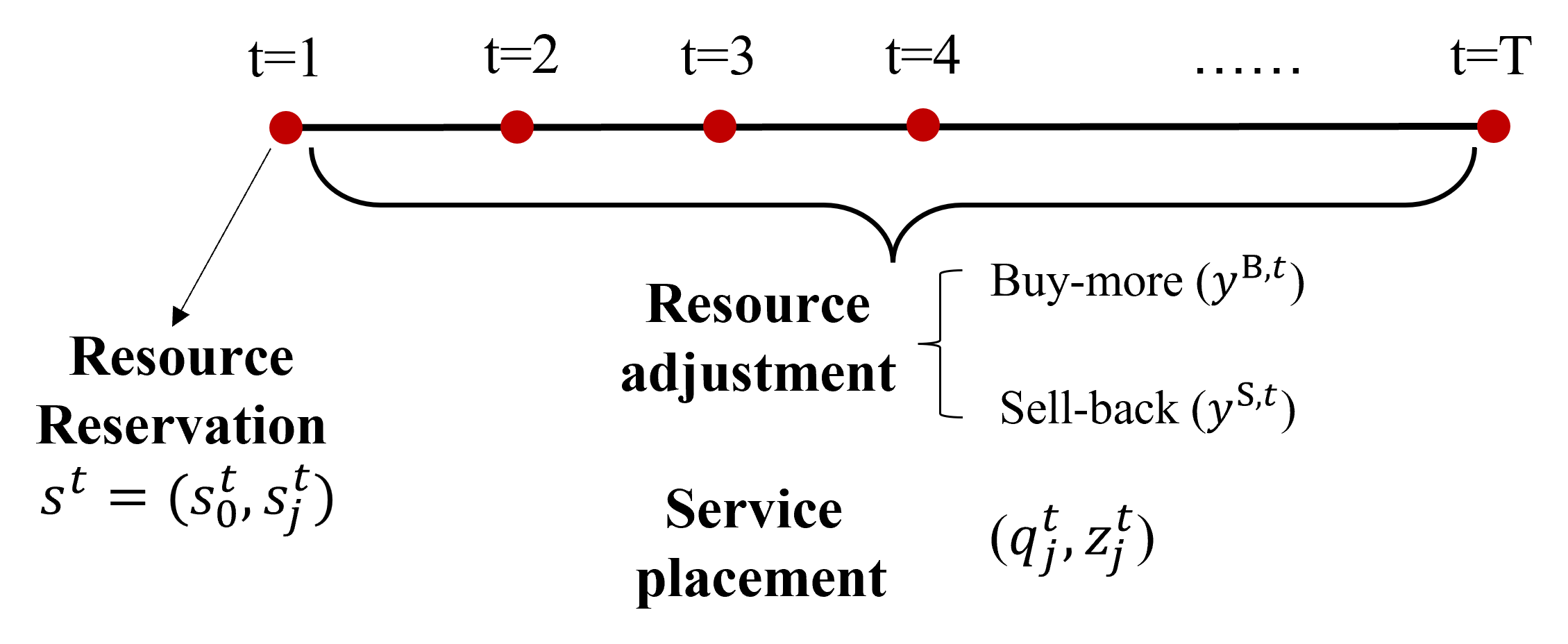}
\caption{Resource trading model}
\label{fig: multi_period_model}
\vspace{-0.3cm}
\end{figure}

Our proposed model aims to assist the SP in optimizing resource reservation under uncertainty. 
Besides resource trading, the SP must optimize service placement, assuming the service is consistently available in the cloud. Let the binary parameter $z_j^{0}$ signify the initial service placement status at EN $j$. 
We introduce the binary 
variable $z_j^{t}$, which equals 
one when the SP places the service onto EN $j$ at time $t$. 
This allows the SP to adapt its decisions based on actual demand realization. 
We also define the binary variable $q_{m,j}^{t}$ to indicate 
whether the service is downloaded from the source EN $m$ to the destination EN $j$ at time $t$. 
The binary variable $q_{0,j}^{t}$ indicates if 
the service is downloaded from the cloud to EN $j$.


\begin{figure}[t!]
\centering
\includegraphics[width=0.46\textwidth,height=0.2\textheight]{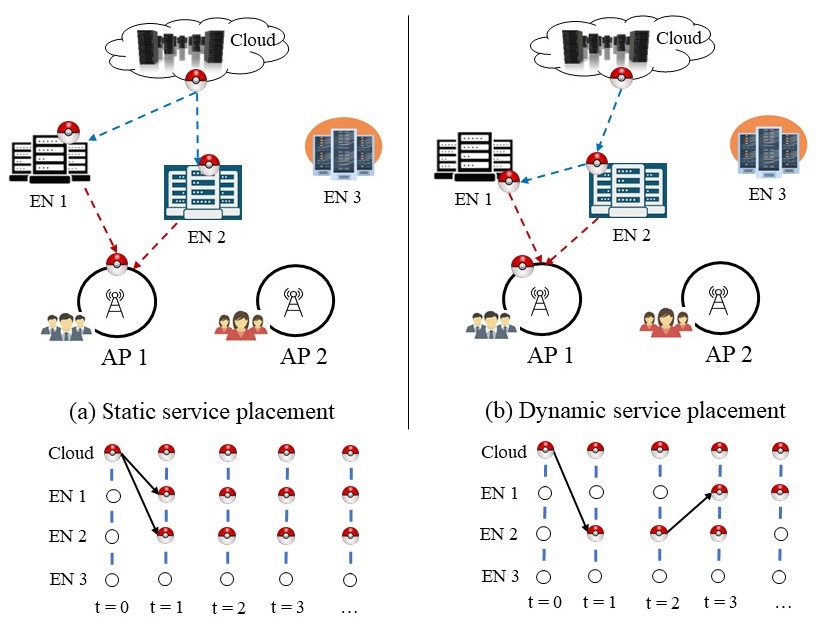}
\caption{System model}
\label{fig: model}
\vspace{-0.5cm}
\end{figure}

Fig.~\ref{fig: model} presents a simplified EC network with 2 APs, 3 ENs, and a  cloud data center (DC), comparing the traditional static service placement model 
with the dynamic service placement model. 
In the static model, service placement decisions 
remains fixed during operation, irrespective of demand fluctuations. In contrast, the dynamic service placement scheme 
allows flexible adjustments. 
This dynamic approach 
leverages nearby ENs for service downloading and installation, 
resulting in significant cost savings. It also enables the SP to optimize resource utilization, adapting to the spatio-temporally varying demands.

Besides service placement and resource trading, the SP must optimize workload allocation from different areas to the ENs and the cloud. Ideally, user demand in area $i$, denoted by $\lambda_i$, should be served by the nearest EN to minimize network latency. However, due to the limited capacity of each EN and the procured resources, strategic workload allocation is essential to ensure high QoS while minimizing costs. Given the uncertain demand at the initial stage, a portion of the workload may be directed to the remote cloud, potentially resulting in lower QoS. Let $x_{i,j}^{t}$ and $x_{i,0}^{t}$ denote the workload allocated from AP $i$ to EN $j$ and the cloud at time $t$, respectively. 



The following first presents a deterministic model tailored for the underlying problem. 
Then, we propose two distinct models to account for demand uncertainties. The first model employs a conventional polyhedral uncertainty set, while the second model incorporates spatial-temporal dynamics, encompassing changes over time and variations across different geographical locations.

 \vspace{-0.3cm}
\subsection{Deterministic Formulation}
The objective of the SP is to minimize the overall cost 
while concurrently improving the user experience. We will commence by introducing the cost model 
along with the associated constraints in the deterministic problem formulation.


\noindent
\textbf{\textit{1) Resource reservation cost:}} The cost of computing resources varies across ENs. 
Recall that $p_{0}^{t}$ and $p_{j}^{t}$ represent the reserved price for one unit of computing resources purchased from the remote cloud and EN $j$ at time $t$, respectively. 
Let $\delta$ denote the duration of a time slot. The total resource reservation cost is:
\begin{align}
\label{Cost:reserve}
    C^{\sf 1} =\! \sum_{t \in \mathcal{T}} \sum_{j \in \mathcal{J}} \delta p_j^{t} s_{j}^{t} +  \sum_{t \in \mathcal{T}} \delta p_{0}^{t} s_{0}^{t}. 
\end{align}

\noindent
\textbf{\textit{2) Resource adjustment cost:}} During the operational stage, the SP has the flexibility to adjust the reserved computing resources from the planning stage. In practice, the primary factor in determining if computing resources are surplus or insufficient is the relation between the reserved computing resources and the actual demand. This relationship will be further discussed in the (\ref{constr-det_adjust1})-(\ref{constr-det_adjust2}). Let $w$ represent the average amount of resources needed to process a request. If the reserved resources ($s_j^{t}$) are sufficient to meet demand (i.e., $s_j^{t} \geq w \sum_{i} x_{i,j}^{t}$), those surplus computing resources can be sold back to the market at a reduced price $\ba^{t} = (a_j^{t},a_0^{t})$, lower than the reserved price $\bp^{t}$. Conversely, if reserved resources are insufficient to meet demand, the SP can purchase additional computing resources $(y_{j,t}^{S})$ at a higher cost $\be^{t} = (e_j^{t},e^{t}_{0})$ compared to $\bp^{t}$. Maintaining the price relationship $\be^{t} \geq \bp^{t} \geq \ba^{t}$ is crucial to prevent any potential arbitrage. The resource adjustment cost can be expressed as:\vspace{-0.1cm}
\begin{align}
\label{Cost:adjus4t}
    C^{\sf 2} &=\! \sum_{t \in \mathcal{T}} \sum_{j \in \mathcal{J}} \delta \bigg[ e_j^{t} y_{j}^{B,t} \!-\! a_{j}^{t} y_{j}^{S,t} \!+\! e_{0}^{t} y_{0}^{B,t} \!-\! a_{0}^{t} y_{0}^{S,t} \bigg].
\end{align}

\noindent
\textbf{\textit{3) Service installation cost:}} The service installation cost (e.g., license cost, costs for setting up the service on an EN), denoted by $f_j$, is incurred when the service is placed onto EN $j$ that does not have the service. We assume that all services are installed in the cloud. Thus, the total service installation cost is: 
\vspace{-0.1cm}\begin{align}
\label{Cost:placement}
    C^{\sf 3} = \sum_{t \in \mathcal{T}} \sum_{j \in \mathcal{J}}f_j^{t} (1 - z_{j}^{t - 1}) z_{j}^{t}.
\end{align}

\noindent
\textbf{\textit{4) Service download cost:}} The SP can dynamically decide to download services from nearby ENs where services are already installed, rather than relying solely on the remote cloud. The cost of downloading a service from source EN $m$ to EN $j$ at time $t$ is denoted by $h_{m,j}^{t}$, and the cost of downloading services from the cloud to EN $j$ is represented by $h_{0,j}^{t}$. 
The total service download cost is expressed as follows: \vspace{-0.1cm}
\begin{align}
\label{Cost:download}
    C^{\sf 4} \!=\! \sum_{t \in \mathcal{T}} \bigg[ \sum_{j \in \mathcal{J}} \sum_{m \in \mathcal{J} \setminus \{j\} } h_{m,j}^{t} q_{m,j}^{t} + h_{0,j}^{t} q_{0,j}^{t} \bigg].
\end{align}

\noindent
\textbf{\textit{5) Storage cost:}} When the service is placed at EN $j$, it occupies the storage space of the EN. Let $f_j^{\sf s}$ be the storage cost at EN $j$. Then, the total storage cost is given as follows:
\begin{align}
\label{Cost:storage}
    C^{\sf 5} =\! \sum_{t \in \mathcal{T}} \sum_{j \in \mathcal{J}}  f_{j,t}^{s} z_{j}^{t}.
\end{align}


\noindent
\textbf{\textit{6) Network delay cost:}}
Service quality is evaluated based on network delay, representing the time taken for data to traverse between different areas. The cost associated with network delay is directly related to the amount of workload transferred over the network. The delay penalty, denoted by $\rho$, reflects the SP's preference for minimizing network delay. A higher value of $\rho$ indicates the SP's willingness to incur a greater cost for achieving lower network delay. The total network delay cost, encompassing both cloud and edge network delay, is given by:
\begin{align}
\label{Cost:delay}
    C^{\sf 7} = \rho \: \sum_{t \in \mathcal{T}} \bigg[ \sum_{i \in \mathcal{I}} d_{i,0} x_{i,0}^{t} + \sum_{i \in \mathcal{I}}\sum_{j \in \mathcal{J}} d_{i,j}x_{i,j}^{t} \bigg].
\end{align}


\noindent
\textbf{\textit{7) Network bandwidth cost:}} The bandwidth cost between two areas depends on the number of network hops and the data traffic between them. Let $H_{i,j}$ represent the number of network hops between AP $i$ and EN $j$, while $H_{i,0}$ denotes the number of hops between AP $i$ and the cloud. Given a unit price $\beta$ for network bandwidth 
and the average data size per request as $b$, the overall bandwidth cost can be calculated as follows:
\begin{align}
\label{Cost:bandwidth}
    C^{\sf 8} =  \beta b \sum_{t \in \mathcal{T}} \bigg[ \sum_{i \in \mathcal{I}} H_{i,0} x_{i,0}^t + \sum_{i \in \mathcal{I}} \sum_{j \in \mathcal{J}} H_{i,j} x_{i,j}^{t} \bigg].
\end{align}

Overall, the deterministic model can be formulated as: 
\begin{subequations}
\label{DET}
\begin{align}
& \!\! \textbf{DET} \!\!: \underset{\bs,\bx,\bq,\bz,\by}{\text{min}} ~ \mathcal{C}^1 \!+\! \mathcal{C}^2 \!+\! \mathcal{C}^3 \!+\! \mathcal{C}^4 + \mathcal{C}^5 \!+\! \mathcal{C}^6 \!+\! \mathcal{C}^7 
\label{eq-det_obj}\\
& \quad \quad  \quad ~ \text{s.t.} \quad (\ref{det_cap}) - (\ref{det_decision}).
\end{align}
\end{subequations}
To simplify the notation, we introduce auxiliary cost parameters $\varsigma_j^{t}= f_j^{t} (1 - z_j^{t-1}) + f_{j,t}^{\sf s}, \forall j,t$, encapsulating the costs associated with service placement for each EN. Additionally, we define the cost parameters $c_{i,0} = \rho d_{i,0} + \beta b H_{i,0}$ and $c_{i,j} = \rho d_{i,j} + \beta b H_{i,j}$.
We will describe constraints (\ref{det_cap})--(\ref{det_decision}) in the following. 

\noindent
\textbf{\textit{Reserved resource capacity constraints:}} 
The total reserved resources at EN $j$ cannot exceed its capacity $C_j$.
For simplicity, we assume the cloud has unlimited resources. \vspace{-0.2cm}
\begin{align}
    s_{0}^{t} \geq 0, \forall t, ~ 0 \leq s_{j}^{t} \leq C_j,~ \forall j,t.\label{det_cap}
\end{align}

\noindent
\textbf{\textit{Workload allocation constraints:}} The demand from each area must be processed either at ENs or at the cloud:\vspace{-0.2cm}
\begin{align}
    x_{i,0}^{t} + \sum_j x_{i,j}^{t} \geq \lambda_{i}^{t}, ~ \forall i,t. \label{det_supply}
\end{align}

\noindent
\textbf{\textit{Resource procurement constraints:}} Similar to (\ref{det_cap}), the total procured resources at an EN is limited by its capacity. Furthermore, the amount of resources sold back to the market should not exceed the reserved resources in the planning stage, as specified by constraints (\ref{det_sellback1}) and (\ref{det_sellback2}). 
\begin{subequations}
\label{constr-det_adjust1}
\begin{align}
    s_{j}^{t} + y^{B}_{j,t} - y^{S}_{j,t} \leq C_j z_{j}^{t} \leq C_j^t z_{j}^{t}, ~ \forall j,t \label{det_adjust_cap} \\
    0 \leq y^{S}_{j,t} \leq s_{j}^{t}, ~ \forall j,t \label{det_sellback1}\\
    0 \leq y^{S}_{0,t} \leq s_{0}^{t}, ~ \forall t. \label{det_sellback2} 
\end{align}
\end{subequations}
Let $w$ represent the average amount of resources needed to process a request.The following constraints ensure the procured resources are sufficient to handle the allocated workload: 
\begin{subequations}
\label{constr-det_adjust2}
\begin{align}
    s_{0}^{t} + y^{B}_{0,t} - y^{S}_{0,t} \geq w \sum_i x_{i,0}^{t}, ~ \forall t \label{det_adjust1}\\
    s_{j}^{t} + y^{B}_{j,t} - y^{S}_{j,t} \geq  w \sum_i x_{i,j}^{t}, ~ \forall j,t. \label{det_adjust2}
\end{align}
\end{subequations}

\noindent
\textbf{\textit{Service placement constraints:}} The service can be downloaded from either the cloud or nearby ENs that have installed the service. Constraints (\ref{det_place1}) imply that the SP cannot download the service from EN $m$ to any other ENs at time $t$ if the service is not placed at EN $m$ at time $t-1$. 
Besides, if the service is not installed at EN $j$ at time $t-1$ and is installed at EN $j$ at time $t$, it has to be downloaded from either the cloud or nearby ENs, shown in constraints (\ref{det_place2}). Our model can be extended to incorporate other placement constraints, such as the minimum active time required for each service placement decision.
\vspace{-0.15cm}
\begin{subequations}
\label{constr-det_place}
\begin{align}
    \sum_{j}  q_{m,j}^{t} \leq z_{m}^{t-1}, \: \forall m,t \label{det_place1}\\
    \sum_{m} q_{m,j}^{t} + q_{0,j}^{t} \geq z_{j}^{t} - z_{j}^{t-1}, ~ \forall j,t.  \label{det_place2}
\end{align}
\end{subequations}

\noindent
\textbf{\textit{Constraints on decision variables:}} \vspace{-0.2cm}
\begin{subequations}
\label{det_decision}
\begin{align}
    x_{i,0}^{t} \geq 0, ~ \forall i,t ;~ x_{i,j}^{t} \geq 0, ~ \forall i,j,t   \label{det_var1} \\
    z_j^{t}, q_{0,j}^{t} \in \{0,1\},  ~ \forall j,t ; ~ q_{m,j}^{t} \in \{0,1\}, ~ \forall m,j,t.  \label{det_var2} 
\end{align}
\end{subequations}

\subsection{Uncertainty Modeling}
\label{uncertainty}


In the deterministic model (\textbf{DET}), the demand $\lambda_i$ during the scheduling horizon is assumed to be precisely known.  
However, demand often fluctuates and is uncertain. It can significantly differ from the forecast value. Thus, the \textbf{DET} model can result in suboptimal solutions. 
We propose two approaches for modeling demand uncertainty. The first and conventional approach models static demand uncertainty, treating demands across different time slots as independent variables without any correlation. In contrast, the second approach models correlated demand uncertainty, capturing the spatial-temporal dynamics of demand. This model considers dependencies and trends in demand across different times and locations, enabling more informed and effective decision-making.

\subsubsection{Static uncertainty set (\textit{SUS})}
We leverage the traditional RO approach to model the demand uncertainty. The actual demand aggregated at AP $i$ is considered to fluctuate within the range of $[\bar{\lambda}_{i}^{t} - \Tilde{\lambda}_{i}^{t}, \bar{\lambda}_{i}^{t} + \Tilde{\lambda}_{i}^{t}]$. Here, $\bar{\lambda}_{i}^{t}$ represents the forecast demand for AP $i$ at time $t$, while $\Tilde{\lambda}_{i}^{t}$ captures the maximum  demand variation. To represent the uncertain demand, we define a polyhedral uncertainty set as follows \cite{Duong_iot}:
\begin{subequations}
\label{static_uncertainty}
\begin{align}
    \mathcal{D}_1(\lambda) = \bigg\{& \lambda_{i}^{t}: \lambda_{i}^{t} = 
    \bar{\lambda}^{t}_{i} + g_{i}^{t} \Tilde{\lambda}_{i}^{t}, \: g_{i}^{t} \in [-1,1], \forall i,t, \\
     & \sum_{i \in \mathcal{I}} |g_{i}^{t}| \leq \Gamma_1, ~ \forall t \bigg\},
\end{align}
\end{subequations}
where the parameter $\Gamma_1$ controls the robustness of the optimal solution. Specifically, a larger value of $\Gamma_1$ indicates higher conservatism in the optimal solution. 

\subsubsection{Dynamic uncertainty set (\textit{DUS})}
\label{dynamic_uncertainty}
The traditional static uncertainty set $\mathcal{D}_1 (\lambda)$ fails to capture the dynamic demand correlation. To explicitly model the spatio-temporal correlations between demands across different areas over time, we propose a multi-period dynamic demand uncertainty set that utilizes a linear dynamic system. Fig.~\ref{fig: static_dynamic} visualizes the main difference between the static and dynamic uncertainty sets. The red dots show all possible uncertain realizations, which are captured by the blue contour, i.e., shape of the predefined uncertainty set. Since our problem considers the worst-case demand (e.g., largest demand) within the uncertainty set. The green line/dots indicates the extreme value within the uncertainty set. Clearly, it shows that the DUS is less conservative compared to SUS since SUS remain the same with $t$. The dynamic uncertainty set $\mathcal{D}_2(\lambda)$ is defined as follows:
\vspace{-0.2cm}
\begin{subequations}
\label{dyanmic_uncertainty}
\begin{align}
    \mathcal{D}_2(\lambda^{t-L:t-1}) = \bigg\{ & \lambda_{i}^{t}: ~ \lambda_{i}^{t} = \bar{\lambda}_{i}^{t} + \Tilde{\lambda}_{i}^{t}, \forall i,t \label{spatialtemperal} \\
    & \Tilde{\lambda}_{i}^{t} = \sum_{s = 1}^{L} A^{s}_{i} \Tilde{\lambda}_{i}^{t - s} +  \underbrace{\mathbf{B}_{i}  g_{i}^{t}}_{\epsilon_{i}^{t}}, ~ \forall i,t \label{deviation}\\
    \sum_{i \in \mathcal{I}}  &|g_{i}^{t}| \leq \Gamma_1, ~\forall t;  ~ g_{i}^{t} \in [-1, 1], \forall i,t \label{box} \bigg\}.
\end{align}
\end{subequations}
Equation (\ref{spatialtemperal}) represents the actual demand $\lambda_{i}^{t}$ as a combination of the forecast demand $\bar{\lambda}_i^{t}$ and a deviation $\Tilde{\lambda}_i^{t}$. The forecast demand can be estimated using historical data. The deviation  $\Tilde{\lambda}_i^{t}$ incorporates a linear dynamic relationship involving residuals ${\Tilde{\lambda}_i^{t-s}:s=1,\ldots,L}$, observed in previous time slots from $t-L$ to $t-1$, along with an error term $\epsilon_{i}^{t}$. Here, the parameter $L$ serves as a time lag operator. In the \textit{DUS} framework, the deviation $\Tilde{\lambda}_{i}^{t}$ follows a multivariate auto-regressive process of order $L$, determined by the innovation process ${\epsilon_{i}^{t}}$. The parameter $A_i^s$ captures the temporal correlation between $\Tilde{\lambda}_i^{t}$ and $\Tilde{\lambda}_i^{t-s}$, while $\mathbf{B}_{i}$ quantifies the spatial relationship of traffic demand at adjacent areas at time $t$. 
\begin{figure}[h!]
\vspace{-0.2cm}
\centering
\includegraphics[width=0.44\textwidth,height=0.115\textheight]{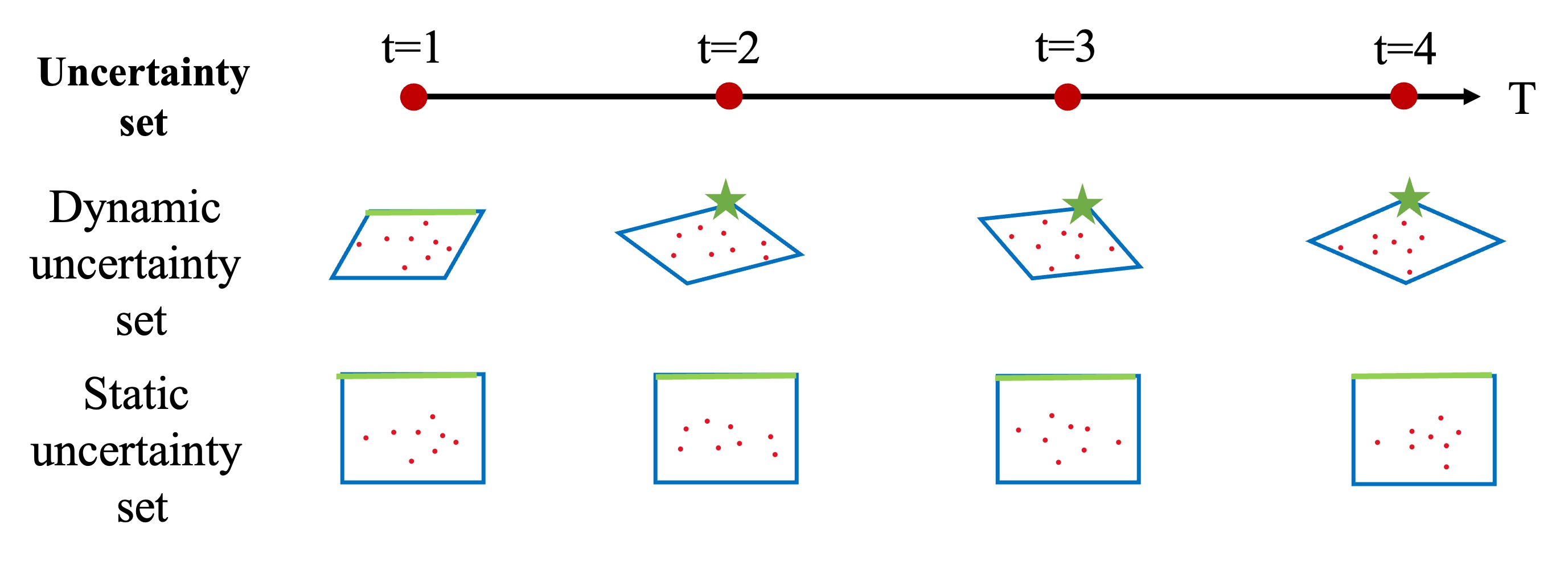}
\caption{Uncertainty comparison}
\label{fig: static_dynamic}
\end{figure}
\vspace{-0.3cm}

The error terms in the model, represented by the vector $[\epsilon_{1}^{t},\ldots,\epsilon_{I}^{t}]^\mathsf{T}$, are independent across different time slots and follow a normal distribution with mean $0$ and covariance matrix $\Sigma$. Similar to \textit{SUS}, $\Gamma_1$ controls the size of uncertainty and influences the robustness of the optimal solution, while the parameter $g_{i}^{t}$ governs the maximum forecast error relative to the forecast demand. The objective of this linear dynamic model is to dynamically incorporate demand while effectively leveraging all available statistical information derived from historical data. In Section \ref{time_series_process}, 
we will discuss the procedure to estimate those parameters based on multivariate auto-regressive tools.

Fig. \ref{fig: static_dynamic} compares DUS and SUS, highlighting the key differences between them. The red dots represent historical data, with uncertainty derived from all possible realizations. In RO, we aim to encompass all data points within a specific parametric uncertainty set, represented by the blue contour. 
The shapes of these uncertainty sets, such as box, polyhedron, or ellipse, reflect different assumptions about data uncertainties. Unlike SUS, which remains constant over time as it treats demand at each interval independently, DUS adapts over time, influenced by the spatial-temporal dynamics of demand variability, allowing for a more responsive representation of uncertainty and resulting in a less conservative robust solution.

\subsection{Adaptive Robust Optimization Problem Formulation}
\label{prob_form}
The proposed robust framework consists of two stages. In the planning stage, the SP reserves computing resources at ENs and the cloud at reserved prices. Given the first-stage decisions $\bs$ and the worst-case demand realization $\lambda$, the SP proceeds to make real-time operational decisions, including service placement $(\bz)$, service download $(\bq)$, resource adjustment $(\by)$, and workload allocation $(\bx)$ decisions. These definitions allow us to represent the costs associated with delay and network hops in the subsequent formulation of the ARO framework:
\begin{subequations}
\label{aro_model}
\begin{align}
& \!\!\mathcal{P}_1 \!\!: \underset{\bs\in \Omega_{1}(s)}{\text{min}} ~ \mathcal{C}^1  + \underset{\lambda \in \mathcal{D}_{2}}{\max} \min_{\substack{\bx,\bq,\bz,\by\\ \in \Omega_{2}(\mathbf{s},\mathbf{\lambda})}} \Big\{ \mathcal{C}^2 + \mathcal{C}^3 + \mathcal{C}^4 \nonumber \\
&  \qquad\qquad\qquad\quad \quad \quad \quad \quad \quad + \mathcal{C}^5 +  \mathcal{C}^6 + \mathcal{C}^7 + \mathcal{C}^{8} \Big\} 
\label{aro_obj}\\
& ~~ \text{s.t.} ~\Omega_1(\bs) = \big\{ \mathbf{s} ~ | ~ (\ref{det_cap}) \big\} \\
& ~\quad  \quad \Omega_2(\bx,\by,\bq,\bz) = \big\{ (\bx,\by,\bq,\bz) ~ | ~  (\ref{det_supply}) - (\ref{det_decision}) \big\}.\\
& ~\quad \quad  \lambda_{i}^{t} \in \mathcal{D}_2\left(\lambda_{i}^{[t-L:t-1]}\right), ~ \forall i,t \label{aro_uncertainty} .
\end{align}
\end{subequations}
where $\Omega_1$ and $\Omega_2$ are the feasible sets of the first-stage variables $(\bs)$ and second-stage variables $(\bx,\by,\bq,\bz)$, respectively.

\section{Solution Approach}
\label{solution}

This section introduces an iterative algorithm for solving our proposed ARO model, characterized by a complex tri-level ``min-max-min" structure and intricate dependencies between first and second-stage variables. It is crucial to emphasize that while incorporating the binary variables $\bq$ (for service download decision) and $\bz$ (for service placement decision) into our robust framework enhances the characterization of actual operational conditions, it introduces a new layer of complexity in solving the problem. Specifically, these binary recourse variables lead to a non-convex formulation of the second-stage problem, rendering applying the strong duality theorem and the Karush-Kuhn-Tucker (KKT) conditions unfeasible.

To address this challenge, 
we leverage the unique structure of our two-stage RO model, allowing us to approach the inner recourse problem as a two-stage ``max-min" problem. 
We introduce a novel iterative 
algorithm (\textit{ROD}) 
designed to effectively decompose the overall problem into two levels: an outer-level problem (discussed in Section \ref{Master}) and an inner-level problem (outlined in Section \ref{subproblem}). Both levels are suitable for applying the variable and constraint generation method to refine the solution space. Importantly, the resulting subproblem from this decomposition is convex and can be reformulated into a single-level optimization problem using either the strong duality theorem or the KKT conditions.


\vspace{-0.3cm}
\subsection{Robust Optimal Dynamic Algorithm (\textit{ROD})}
We can observe that when the variables $\textbf{q}$ and $\textbf{z}$ are fixed, the innermost minimization problem in (\ref{aro_model}) simplifies to a linear, and thus convex, optimization problem.  This allows us to apply techniques such as CCG \cite{CCGARO} to solve the resulting problem. Hence, a straightforward approach might involve solving (\ref{aro_model}) for every possible value of $\textbf{q}$ and $\textbf{z}$ and selecting the solution with the lowest optimal value. However, this approach is impractical due to the exponential number of possible values of  $\textbf{q}$ and $\textbf{z}$. Therefore, we propose a smart enumeration approach that incrementally adds the most significant value of the integer variables in each iteration until convergence is achieved. Our experiments demonstrate that the proposed algorithm typically converges within a small number of iterations.


The following elaborates on the underlying ideas, with the detailed algorithm presented in Section \ref{alg_detail}. 
First, note that (\ref{aro_model}) can be decomposed into an outer-loop problem and an inner-loop problem. The outer-loop problem remains within the two-stage RO framework, which can be further decomposed into the outer master problem and the inner ``max-min" subproblem. Let $\mathcal{D}^{*} = \{\lambda^1,\lambda^2,\dots,\lambda^l,\dots,\lambda^{K}\}$ be the set of $K$ extreme points of $\mathcal{D}_2$, where $\lambda^l$ is the $l$-th extreme points of set $\mathcal{D}_2$, and $\lambda^l = (\lambda_1^{l},\lambda_2^{l},\dots,\lambda_I^{l})$. By enumerating all elements of $\mathcal{D}^{*}$, the original problem $(\mathcal{P}_1)$ is equivalent to: 
\vspace{-0.2cm}
\begin{subequations}
\label{reformulation1}
\begin{align}
& \underset{s,q,z,x,y}{\text{min}} ~ \mathcal{C}^1 + \mathbf{\eta} \\
& \text{s.t.} ~~~~ \eta \geq \sum_{j,t} \sum_{m \in \mathcal{J} \setminus \{j\}} h_{m,j} q_{m,j}^{t,k} + \sum_{j,t} h_{0,j} q_{0,j}^{t,k} +  \sum_{j,t} \varsigma_j^{t} z^{t,k}_{j}  \nonumber \\
&  + \sum_{j,t} \delta \big[ e_{j}^{t}  y_{j,t}^{B,k} - a_{j}^{t} y_{j,t}^{S,k} \big] + \sum_{t} \delta \big[ e_{0}^{t} y_{0,t}^{B,k} - a_{0}^{t} y_{0,t}^{S,k} \big]  \nonumber \\
& + \sum_{i,j,t} c_{i,j} x_{i,j}^{t,k} + \sum_{i,t} c_{i,0} x_{i,0}^{t,k},  ~ \forall k \leq K \\
& s_{j}^{t}, \: s_{0}^{t} \in \Omega_{1} (\bs)\\
& \lambda_{i}^{t,k} \in \mathcal{D}_2\big(\lambda_{i}^{[t-L:t-1]}\big), ~ \forall k \leq K \\
&  (\bq^{k},\bz^k, \bx^{k}, \by^{k}) \in \Omega_2(\bq,\bz,\bx,\by,\bs,\blambda), ~ \forall k \leq K.
\end{align}
\end{subequations}

When the size of $\mathcal{D}^{*}$ is large, solving the MILP in (\ref{reformulation1}) becomes impractical. Additionally, identifying the set of 
$K$ extreme points of the uncertainty set $\mathcal{D}_2$ is a complex task, making it challenging to search for the optimal solution among all extreme points. Therefore, we develop an iterative approach to solve $(\mathcal{P}_1)$, or equivalently, problem (\ref{reformulation1}). Specifically, at each iteration, we consider a relaxed version of (\ref{reformulation1}) with only a subset of elements of $\mathcal{D}^{*}$. 
The optimal value obtained from this relaxed outer master problem serves as a lower bound (LB) for the original problem $(\mathcal{P}_1)$. Additionally, the optimal solution to this relaxed problem is used as input to the inner \textit{max-mi}n problem in (\ref{aro_model}), which subsequently generates a new extreme point of the uncertainty set $\mathcal{D}_2$ and provides a valid upper bound (UB) for problem $(\mathcal{P}_1)$. 
This newly generated extreme point is then used to add a new constraint to the relaxed master problem. 
By gradually adding more constraints 
to the outer master problem (MP), the LB value becomes non-decreasing. Concurrently, by definition, the UB value is non-increasing. Thus, intuitively, the UB and LB values will converge to the optimal value of $(\mathcal{P}_1)$ after a finite number of iterations.

The inner bilevel \textit{max-min} problem in (\ref{aro_model}) is difficult to solve due to the presence of the integer variables \textbf{q} and \textbf{z}. Thus, we propose converting this \textit{max-min} problem into a trilevel problem and solving it iteratively in a master-subproblem framework, similar to the approach used for the original problem. The following presents the outer master problem, the solution to the inner \textit{max-min} problem, and the \textit{ROD} algorithm.


\vspace{-0.3cm}
\subsection{Outer Master Problem (Outer-MP)}
\label{Master}
The outer level of \textit{ROD} follows an iterative, decomposition-based procedure, involving an iterative interaction between the master problem (MP) and the subproblem.
The initial outer MP (\textbf{Outer-MP}) contains no extreme points. In each iteration, a new extreme point is added to \textbf{Outer-MP}, and the first-stage solution $(\bs)$ is updated. At iteration $k+1$, \textbf{Outer-MP} is: 
\begin{subequations}
\label{MP1}
\begin{align}
& \textbf{Outer-MP}: \min_{\bs,\bq,\bz,\bx,\by} \: \sum_{j,t} \delta  p_{j}^{t} s_{j}^{t} + \sum_{t}  \delta p_{0}^{t} s_{0}^{t} + \mathbf{\eta} \\
& \text{s.t.} ~~ \eta \geq ~ \sum_{j,t} \varsigma_j^{t} z^{t,k}_{j} + \sum_{j,t} \sum_{m \in \mathcal{J} \setminus \{j\}} h_{m,j} q_{m,j}^{t,l} + \sum_{j,t} h_{0,j} q_{0,j}^{t,l}  \nonumber \\
& + \sum_{i,t} c_{i,0} x_{i,0}^{t,l} + \sum_{i,j,t} c_{i,j} x_{i,j}^{t,l} + \sum_{j,t} \delta \Big[ e_{j}^{t}  y_{j,t,l}^{B} - a_{j}^{t} y_{j,t}^{S,l} \Big] \nonumber \\
& + \sum_{t} \delta \Big[ e_{0}^{t} y_{0,t}^{B,l} - a_{0}^{t} y_{0,t}^{S,l} \Big] , ~ \forall l \leq  k\\
& s_j^{t,l}, s_0^{t,l} \in \Omega_1(\bs), \forall l \leq k\\
& (\bq^{l},\bz^{l},\bx^{l},\by^{l}) \in \Omega_2(\bq,\bz,\bx,\by,\bs,\mathbf{\blambda}^{l,*}), \forall l \leq k
\end{align}
\end{subequations}
Here, $\blambda \!=\! \{ \lambda_{i}^{t,1,*},\lambda_{i}^{t,2,*},\dots,\lambda_{i}^{t,k,*}\}$ is the set of optimal solutions to the subproblem in all previous iterations $l=1,\cdots,k$. The optimal solution to \textbf{Outer-MP} includes the optimal first-stage variables $(\bs^{k+1,*})$, the second-stage cost $(\eta^{l,*})$, workload allocation variables $(\bx^{l,*})$, service download variables $(\bq^{l,*})$, service placement variables $(\bz^{l,*})$ and resource adjustment variables $(\by^{l,*})$, $\forall l \leq k$. The resource reservation decision $(\bs)$ will serve as input to the outer subproblem problem described in Section~\ref{subproblem} below. From \textbf{Outer-MP}, we can derive the LB for the optimal value of the original problem. The LB at iteration $k+1$ is:\vspace{-0.2cm}
\begin{align}
\label{outer_LB}
    LB^{\text{outer}} = \sum_{j,t}& \delta \bigg[ p_{j}^{t} s_{j}^{t,k+1,*} + p_0^{t} s_0^{t,k+1,*}\bigg] + \eta^{k+1,*}.
\end{align}

\vspace{-0.4cm}
\subsection{Outer Subproblem}
\label{subproblem}
The objective of this outer bilevel \textit{"max-min"} subproblem is to determine the optimal solution to the innermost minimization problem under the worst-case scenario within the uncertainty set $\mathcal{D}_2$. However, we cannot directly reformulate the problem using standard techniques such as the strong duality theorem and KKT conditions due to integer recourse variables in the subproblem. To address this, we propose a novel iterative approach for smart enumeration of possible values of the integer variables. In particular, we first transform this bilevel ``max-min" subproblem into an equivalent trilevel model, as shown in (\ref{Tri_equivalent}), by separating the binary decision variables $(\bq, \bz)$ from the continuous variables $(\bx, \by)$. Let $\Xi \!=\! \{\bq, \bz\}_{r = 1}^{\mathcal{R}}$, where $\mathcal{R}$ represents the total number of discrete points in the set $\Xi$ and $r$ represents the index of each point in $\Xi$. Given the input $\hat{\bs}$ from the outer-level problem, we can express 
the bilevel problem as 
\begin{subequations}
\begin{align}
\label{Tri_equivalent}
    & Q(\hat{s}) = \underset{\lambda \in \mathcal{D}_2}{\text{max}}  \underset{q,z}{\text{min}} \sum_{j,t} \!\sum_{m \in \mathcal{J} \setminus \{j\}} \!\!\! h_{m,j}^{t}  q_{m,j}^{t} \!+\!\! \sum_{j,t} h_{0,j}^{t} q_{0,j}^{t} \!+\!\! \sum_{j,t} \varsigma_j^{t} z^{t}_{j} \nonumber\\ 
    &  + \underset{x,y}{\text{min}} \: \sum_{j,t} \delta \Big[ e_{j}^{t}  y_{j,t}^{B} - a_{j}^{t} y_{j,t}^{S} \Big]  + \sum_{t} \delta \Big[ e_{0}^{t} y_{0,t}^{B} - a_{0}^{t} y_{0,t}^{S} \Big]\nonumber\\
    & + \sum_{i,t} c_{i,0} x_{i,0}^{t} + \sum_{i,j,t} c_{i,j} x_{i,j}^{t}  \\
    & \text{s.t.}~~  (\bq,\bz,\bx,\by) \in \Omega_2(q,z,x,t,\lambda,\hat{s})
\end{align}
\end{subequations}

By enumerating all discrete points in $\Xi$, we obtain the equivalent trilevel form as follows:
\begin{subequations}
\begin{align}
    & Q(\hat{s}) = \underset{\lambda \in \mathcal{D}_2}{\text{max}} ~ \theta \\
    & \text{s.t.} ~~ \theta \leq \sum_{j,t} \sum_{m \in \mathcal{J} \setminus \{j\}} h_{m,j}^{t}  q_{m,j}^{t,r} + \sum_{j,t} h_{0,j}^{t} q_{0,j}^{t,r} + \sum_{j,t} \varsigma_j^{t} z^{t,r}_{j} \nonumber\\ 
    & \quad + \sum_{j,t} \delta \Big[ e_{j}^{t}  y_{j,t}^{B,r} - a_{j}^{t} y_{j,t}^{S,r} \Big]  + \sum_{t} \delta \Big[ e_{0}^{t} y_{0,t}^{B,r} - a_{0}^{t} y_{0,t}^{S,r} \Big] \nonumber \\
    & \quad + \sum_{i,t} c_{i,0} x_{i,0}^{t,r} + \sum_{i,j,t} c_{i,j} x_{i,j}^{t,r}, ~ \forall r \leq R \label{eq:thetaUBConstraint}\\
    & (\bq,\bz,\bx,\by) \in \Omega_2(q,z,x,t,\lambda,\hat{s}), \: \forall r \leq R 
\end{align}
\end{subequations}
This formulation exhibits a structure similar to the outer loop, prompting us to explore an iterative, decomposition-based method to solve the inner-loop problem.
Specifically, instead of solving the full equivalent problem for all discrete points in the set $\Xi$, our focus narrows to a subset $\Xi^e \subseteq \Xi$. The resulting relaxed problem produces a upper bound for $Q(\hat{s})$.


Specifically, we decompose the trilevel problem (\ref{Tri_equivalent}) into an inner MP (\textbf{Inner-MP}) and an inner subproblem (\textbf{Inner-SP}). The inner MP deals with the 
optimization problem with fixed binary decision variables $(\hat{z}, \hat{q})$, providing an upper bound $\text{UB}^{\text{Inner}}$ for $Q(\hat{s})$. 
The solution of the inner MP identifies the extreme points that help update the inner subproblem. The inner subproblem calculates optimal decisions $(\bz^*, \bq^*, \bx^*, \by^*)$ and improves the lower bound $\text{LB}^{\text{Inner}}$ for $Q(\hat{s})$. 
Through iterative resolution of the updated \textbf{Inner-MP} and modified \textbf{Inner-SP}, $\text{UB}^{\text{Inner}}$ and $\text{LB}^{\text{Inner}}$ are refined in each iteration. These decisions are then relayed to the outer loop, assisting in the update of the first-stage problem and decisions. We utilize the strong duality theorem to reformulate the innermost problem, as it results in fewer binary variables than using KKT conditions, which was justified in our previous work \cite{resilientEC}. Subsequent sections will delve into the reformulation of the inner problem and outline the process of the iterative \textit{ROD} algorithm.

\subsubsection{Inner Subproblem}
Given uncertain realization $\lambda^{*}$ from \textbf{Inner-MP}, the inner subproblem is expressed as follows: 
\begin{subequations}
\label{Inner_SP}
\begin{align}
& \textbf{Inner-SP}: \: \underset{q,z,x,y,\eta}{\text{min}}  ~ \mathbf{\eta} \nonumber\\
&\text{s.t.} ~~ \eta \geq \sum_{j,t} \sum_{m \in \mathcal{J} \setminus \{j\}} h_{m,j} q_{m,j}^{t} + \sum_{j,t} h_{0,j} q_{0,j}^{t} + \sum_{j,t} \varsigma_j^{t} z^{t}_{j} \nonumber\\ 
& + \sum_{j,t} \delta \Big[ e_{j}^{t}  y_{j,t}^{B} - a_{j}^{t} y_{j,t}^{S} \Big] + \sum_{t} \delta \Big[ e_{0}^{t} y_{0,t}^{B} - a_{0}^{t} y_{0,t}^{S} \Big]\nonumber \\ 
& + \sum_{i,t} c_{i,0} x_{i,0}^{t} + \sum_{i,j,t} c_{i,j} x_{i,j}^{t}  \\
& \Lambda_1(\bq,\bz) = \Big\{ (\ref{constr-det_place}), ~ (\ref{det_var2}) \Big\} \\ 
& \Lambda_2(\bx,\by, \mathbf{\hat{s}}) = \Big\{ (\ref{constr-det_adjust1}) - (\ref{constr-det_adjust2}), (\ref{det_var1}); \\
& x_{i,0}^{t} + \sum_i x_{i,j}^{t} \geq \lambda_{i}^{t,*}, \: \forall i,t \\
&  w \sum_i x_{i,j}^{t} \leq  y^{B}_{j,t} - y^{S}_{j,t} + \hat{s}_{j}^{t}\leq C_j z_{j}^{t}, \: \forall j,t \\
& \hat{s}_{0}^{t} + y^{B}_{0,t} - y^{S}_{0,t} \geq w \sum_i x_{i,0}^{t},  \: \forall t \Big\}.
\end{align}
\end{subequations}
From \textbf{Inner-SP}, we obtain the optimal solution $(\mathbf{q}^{r+1,*}$, $\mathbf{z}^{r+1,*}$, $\mathbf{x}^{r+1,*}$, $\mathbf{y}^{r+1,*}$, $\mathbf{\eta}^{r+1,*})$ and update LB for the inner problem:
\begin{align}
\label{inner_LB}
    & LB^{\text{inner}} = \text{max} \: \Bigg\{LB^{\text{inner}}, \sum_{j,t} \sum_{m \in \mathcal{J} \setminus \{j\}} h_{m,j} q_{m,j}^{t,*} + \sum_{j,t} h_{0,j} q_{0,j}^{t,*} \nonumber \\ 
    & + \sum_{j,t} \delta \Big[ e_{j}^{t}  y_{j,t}^{B,*}  - a_{j}^{t} y_{j,t}^{S,*} \Big] + \sum_{t} \delta  \Big[ e_{0}^{t} y_{0,t}^{B,*} - a_{0}^{t} y_{0,t}^{S,*} \Big] \nonumber \\
    & + \sum_{j,t} \varsigma_j^{t} z^{t,*}_{j} +  \sum_{i,t} c_{i,0} x_{i,0}^{t,*} + \sum_{i,j,t} c_{i,j} x_{i,j}^{t,*} \Bigg\}. 
\end{align}
After the inner problem converges, the extreme points and UB for the inner-loop problem can be obtained. We have:
\begin{align}
\label{UB_outer}
\!\!UB^{\text{Outer}} \!\!=\! \min  \bigg\{\!UB^{\text{Outer}}, \sum_{j,t} \!\delta \!\bigg( p_{j}^{t} s_{j}^{t} +p_{0}^{t}s_{0}^{t} \bigg) \!\!+\! Q(\hat{\mathbf{s}}) \! \bigg\}.\!\!
\end{align}

\subsubsection{Inner Master problem} Given the optimal solution $(\hat{\mathbf{s}})$ from \textbf{Outer-MP}, (\ref{Tri_equivalent}) can be transformed into the following ``max-min-max" problem using the strong duality theorem:
\begin{align}
\label{Inner_MP_obj}
    & Q(\hat{\mathbf{s}}) = \underset{\lambda \in \mathcal{D}_{2}}{\text{max}} ~ \underset{q,z}{\text{min}} \sum_{j,t} \sum_{m \in \mathcal{J} \setminus \{j\}} h_{m,j} q_{m,j}^{t} + \sum_{j,t} h_{0,j}^{t} q_{0,j}^{t} \nonumber\\ 
    & + \sum_{j,t} \varsigma_j^{t} z^{t}_{j} + \underset{\sigma,\epsilon,\pi,\mu,\omega}{\text{max}} \sum_{i,t} \sigma_{i}^{t} \lambda_i^{t} - \sum_{j,t} C_j \phi_{j}^{t} \nonumber \\
    & - \sum_{j,t} \hat{s}_{j}^{t} \epsilon_{j}^{t} - \sum_{t} \hat{s}_0^{t} \epsilon_{0}^{t} - \sum_{j,t} \omega_{j}^{t} C_j \hat{z}_j^{t} + \sum_{j,t} \hat{s}_{j}^{t} \omega_{j}^{t} \nonumber \\
    & - \sum_{j,t} \hat{s}_{j}^{t} \pi_{j}^{t} - \sum_{t}\pi_{0}^{t} s_0^{t}.
\end{align}
Given $(q^{*},z^{*})$ from \textbf{Inner-SP}, the second minimization problem can be reduced, transforming the problem $Q(\hat{s})$ into a single-level maximization problem. Notably, the problem is complicated by the bilinear terms that exist between the dual variable $\sigma_{i}^{t}$ and $g_i^t$, especially given that $g_{i}^{t}$ spans the range from $[-1,1]$. Note that $\Gamma_1$ is an integer number and $\mathcal{D}_2$ is a convex and compact set. Thus, the worst-case scenarios can be achieved when $g$ is either $-1$ or $1$ \cite{resilientEC}. The objective of (\ref{Inner_MP_obj}) involves a nonlinear objective function, binary decision vector $g$, and continuous dual variable $\sigma$. Clearly, $g \in [-1,1]$, and $\sigma$ is non-negative and bounded by some sufficiently large number $M$. Using these bounds, McCormick relaxation can be applied for linearizing the bilinear term (i.e., $\delta = \sigma g$) \cite{McCormick76}. Let $\mathcal{M}_{\delta,g,\sigma}$ denote the set involving the McCormick inequalities for linearizing the bilinear term $(\delta_{i}^{t} = \sigma_i^{t} g_{i}^{t})$. We can reformulate this bilinear term by its convex hull\footnote{The convex hull of a set of points is the smallest convex polygon or polyhedron that contains all the points in the set.} of the set
\vspace{-0.1cm}
\[\left\{(\delta,g,\sigma):\delta = g \sigma,(g,\sigma)\in [g^L,g^U]\times[\sigma^L,\sigma^U]\right\}\]
is given by
\vspace{-0.1cm}
\begin{align*}
\mathcal{M}_{(\delta,g,\sigma)} := & \Big\{(\delta,g,\sigma):~~ - \sigma \le \delta \le \sigma,\\
& \delta \ge - \sigma - M (1 - g) , ~ \delta \le - \sigma  + M(1 - g) \Big\}.
\end{align*}
where $M$ is a sufficiently large number. Then, the reformulated inner MP at inner iteration $n$ is expressed as follows:
\vspace{-0.1cm}
\begin{subequations}
\label{Inner-MP}
\begin{align}
    & \textbf{Inner-MP}: \: \max_{\lambda,\delta,\sigma,\epsilon,\pi,\mu,\omega,\tau} \tau \\
    & \tau \leq \sum_{j,t} \sum_{m \in \mathcal{J} \setminus \{j\}} h_{m,j}^{t}  \hat{q}_{m,j}^{t} + \sum_{j,t} h_{0,j}^{t,n} \hat{q}_{0,j}^{t,n} + \sum_{j,t} \varsigma_j^{t} \hat{z}^{t,n}_{j} \nonumber\\ 
  &    + \sum_{i,t} \sigma_{i}^{t,n} \bar{\lambda}_i^{t} + \sum_{i,t} \Tilde{\lambda}_{i}^{t} \delta_{i}^{t,n} - \sum_{j,t} C_j \phi_{j}^{t,n} - \sum_{j,t} \hat{s}_{j}^{t} \epsilon_{j}^{t,n} \nonumber \\
    &  - \sum_{t} \hat{s}_0^{t} \epsilon_{0}^{t,n} - \sum_{j,t} \mu_{j}^{t,n} C_j \hat{z}_j^{t} + \sum_{j,t} \hat{s}_{j}^{t} \mu_{j}^{t,n} \nonumber \\
    & - \sum_{j,t} \hat{s}_{j}^{t} \pi_{j}^{t,n} - \sum_{t}\pi_{0}^{t,n} \hat{s}_0^{t}, ~\forall n \leq r\\
    & \text{s.t.} ~~ \pi_{0}^{t,n} \leq \delta e_{0}^{t}, ~  \forall t,n \leq r \\
    & - \epsilon_{0}^{t,n} - \pi_{0}^{t,n} \leq - \delta a_{0}^{t}, ~ \forall t,n \leq r \\ 
    & - \phi_{j}^{t,n} - \mu_{j}^{t,n} + \pi_{j}^{t,n} \leq \delta e_{j}^{t}, ~ \forall j,t,n \leq r \\
    & - \epsilon_{j}^{t,n} + \mu_{j}^{t,n} - \pi_{j}^{t,n} \leq - \delta a_j^{t}, ~  \forall j,t,n \leq r \\
    & - \pi_{0}^{t,n} w + \sigma_{i}^{t,n} \leq c_{i,0}, ~  \forall i,t,n \leq r\\
    & - \pi_{j}^{t,n} w + \sigma_i^{t,n} \leq c_{i,j}, ~  \forall i,j,t,n \leq r\\
    & \delta_{i}^{t,n} \in \mathcal{M}_{\sigma,g,\delta}, ~ \forall i,t, n\leq r \\
    & (\pi^n, \delta^n,\epsilon^n,\sigma^n,\mu^n,\omega^n) \geq 0, ~  \forall n \leq r \\
    & \sum_{i} g_{i}^{t} \leq \Gamma_1, ~ \forall t .
\end{align}
\end{subequations}
Here, $(\pi^n, \delta^n,\epsilon^n,\sigma^n,\mu^n,\omega^n)$ represent the dual variables associated with the constraints of the innermost problem. \textbf{Inner-MP} is an MILP that can be computed by the existing solvers. By solving \textbf{Inner-MP}, 
we can then obtain the optimal solution for $\lambda_{i}^{t}$, denoted by the vector $\lambda^{r+1,*}$. This optimal solution is then used to update the UB $(UB^{\text{inner}})$ of $Q(\hat{s})$, which implies:
\begin{align}
\label{UB_update}
    UB^{\text{inner}} = \tau^{*}.
\end{align}
Furthermore, the obtained  $\lambda^{r+1,*}$ serves as an input for the next iteration of \textbf{Inner-SP}. As the UB $(UB^{\text{inner}})$ and LB $(LB^{\text{inner}})$ of the inner problem converge, the optimal solution is used to update the bounds $UB^{\text{outer}}$ and $LB^{\text{outer}}$ of the outer problem.

\vspace{-0.2cm}

\subsection{Algorithm Overview}
\label{alg_detail}
 \begin{algorithm}[t!]
 \caption{Outer-loop Algorithm}
 \label{ALG:outer_loop}
 \begin{algorithmic}
 \renewcommand{\algorithmicrequire}{\textbf{Input:}}
 \renewcommand{\algorithmicensure}{\textbf{Output:}}
 \REQUIRE Outer bound gap $\epsilon_1$, $\mathcal{\lambda}^1$
 \ENSURE  Optimal solution $\mathbf{s}^{*} = (s_{j}^{t},s_{0}^{t})$\\
 \textbf{Initialization} : $LB^{\text{outer}} = -\infty$, $UB^{\text{outer}} = +\infty$, $k = 0$\\
 \REPEAT
 
 \STATE \textbf{\textit{Step 1:}}\:Solve the \textbf{Outer-MP} in (\ref{MP1}) and derive the optimal solution $(\mathbf{s}^{k+1,*},\mathbf{\eta}_{k+1}^{*})$, and successively update the LB based on (\ref{outer_LB}).

 \STATE \textbf{\textit{Step 2:}}\: Solve the inner bilevel ``max-min" subproblem and call \textbf{Inner-loop Algorithm} in \ref{inner_CCG} to get the optimal solutions $(\mathbf{\lambda}^{*}$,\:$\mathbf{\tau}^{*})$. Update $UB^{\text{Outer}}$ by (\ref{UB_outer}).

\UNTIL{$\frac{UB^{\text{outer}} - LB^{\text{outer}}}{UB^{\text{outer}}} \leq \epsilon_1$}
\RETURN $\mathbf{s}^{*} = (s_{j}^{t},s_{0}^{t})$
\end{algorithmic}
\end{algorithm}

\subsubsection{Outer-loop Algorithm}
The procedure to solve the outer problem is outlined in \textbf{Algorithm \ref{ALG:outer_loop}}. The outer loop aims to identify first-stage decisions $\mathbf{s}^{*}$. Note that there is no identified $\lambda^{*}$ in the first iteration. Consequently, we adopt the methods introduced in \cite{Duong_iot} to generate an extreme scenario by maximizing the total demand during the initial iteration. Without loss of generality, we define $\mathbf{\lambda}^1$ as the extreme demand scenario within the set $\mathcal{D}_2$, characterized by the highest total demand. This particular $\mathbf{\lambda}^1$ can be determined as the optimal solution to the subsequent optimization problem:
\begin{subequations}
\label{extreme_ray_gen}
\begin{align}
    & \max_{g, \lambda} ~ \sum_{i,t} \lambda_{i}^{t} \\
    & \text{s.t.} ~~  \lambda_{i}^{t} = \bar{\lambda}_{i}^{t} + \Tilde{\lambda}_{i}^{t}, \forall i,t  \\
    & \qquad \Tilde{\lambda}_{i}^{t} = \sum_{s = 1}^{L} A^{s}_{i} \Tilde{\lambda}_{i}^{t - s} +  \mathbf{B}_{i}  g_{i}^{t}, ~ \forall i,t\\
    & \qquad \sum_{i} |g_{i}^{t}| \leq \Gamma_1, ~ \forall t ; ~ g_{i}^{t} \in [-1, 1], \forall i,t .
\end{align}
\end{subequations}

Note that our proposed model $(\mathcal{P}_1)$ is equivalent to the problem (\ref{reformulation1}), incorporating constraints across all extreme points of $\mathcal{D}_2$. This set of extreme points includes $\mathbf{\lambda}^1$, as $\mathbf{\lambda}^1$ satisfies all constraints in the uncertainty set $\mathcal{D}_2$. Considering $\mathbf{\lambda}^1$ during the initial iteration can expedite the search process.

 \begin{algorithm}[t!]
 \caption{Inner-loop Algorithm}
\label{ALG:inner_CCG}
 \begin{algorithmic}
 \renewcommand{\algorithmicrequire}{\textbf{Input:}}
 \renewcommand{\algorithmicensure}{\textbf{Output:}}
 \REQUIRE Inner bound gap $\epsilon_2$, $\bs^{*}$ = $(s_{j}^{t}$,~$s_0^{t})$.
 \ENSURE  Optimal solution $(\mathbf{\lambda}^{*},\mathbf{x}^{*}$,$\mathbf{y}^{*},\mathbf{q}^{*},\mathbf{z}^{*},\mathbf{\tau}^{*})$\\
 \textbf{Initialization} : $LB^{\text{inner}} = -\infty$, $UB^{\text{inner}} = +\infty$, $r = 0$\\
 \REPEAT 
 \STATE \textbf{\textit{Step 1:}}\:Solve the \textbf{Inner-SP} in (\ref{Inner_SP}) and derive the optimal solution $(\mathbf{q}^{r+1,*},\mathbf{x}^{r+1,*},\mathbf{z}^{r+1,*},\mathbf{y}^{r+1,*},\mathbf{\eta}^{r+1,*})$, and successively update the LB $(LB^{\text{inner}})$ according to (\ref{inner_LB}).

\STATE \textbf{\textit{Step 2:}}\: Solve \textbf{Inner-MP} in (\ref{Inner-MP}) and update UB $(UB^{\text{inner}})$ of inner problem according to (\ref{UB_update}). Obtain an optimal solution for $\lambda_{i}^{t}$, denoted it by $\mathbf{\lambda_{r+1}^{*}}$.

\STATE \textbf{\textit{Step 3:}}\: Update $\lambda_{r+1} = \mathbf{\lambda_{r+1}^{*}}$ and update $r = r+1$\\
\UNTIL{$\frac{UB^{\text{inner}} - LB^{\text{inner}}}{UB^{\text{inner}}} \leq \epsilon_2$}.

\RETURN $(\mathbf{\lambda}_{i}^{t,*}$,\:$\mathbf{\tau}^{*})$

\end{algorithmic}
\end{algorithm}

\subsubsection{Inner-loop Algorithm}
\label{inner_CCG} With the first-stage variable $(\hat{\mathbf{s}})$ as input, the inner-loop algorithm focuses on solving the bilevel ``max-min" subproblem, which involves mixed-integer recourse variables. Initially, $UB^{\text{inner}}$ and $LB^{\text{inner}}$ are set to $+\infty$ and $-\infty$, respectively. Utilizing the first-stage variable $(\hat{\mathbf{s}})$, \textbf{Algorithm \ref{ALG:inner_CCG}} is designed to identify the worst-case scenarios $\lambda^{*}$ and objective value of inner problem $\tau^{*}$. These identified uncertain scenarios are then incorporated back into the algorithm. The detailed procedural steps for this process are outlined in \textbf{Algorithm \ref{ALG:inner_CCG}}.

\textbf{\textit{Remark:}} In the flowchart of \textit{ROD} depicted in Fig.~\ref{fig:Flowchart}, the blue block corresponds to the outer level (\textbf{Algorithm~\ref{ALG:outer_loop}}), while the red block corresponds to the inner level (\textbf{Algorithm~\ref{ALG:inner_CCG}}). Given the first-stage decision, the inner problem aims to identify optimal uncertainty realizations based on the current first-stage variables until convergence criteria are met. These updated realizations then inform the outer level, which, in turn, updates the first-stage decision. 

\vspace{-0.2cm}
\begin{figure}[h!]
 	\centering
 	\includegraphics[width=0.47\textwidth,height= 0.3\textheight]{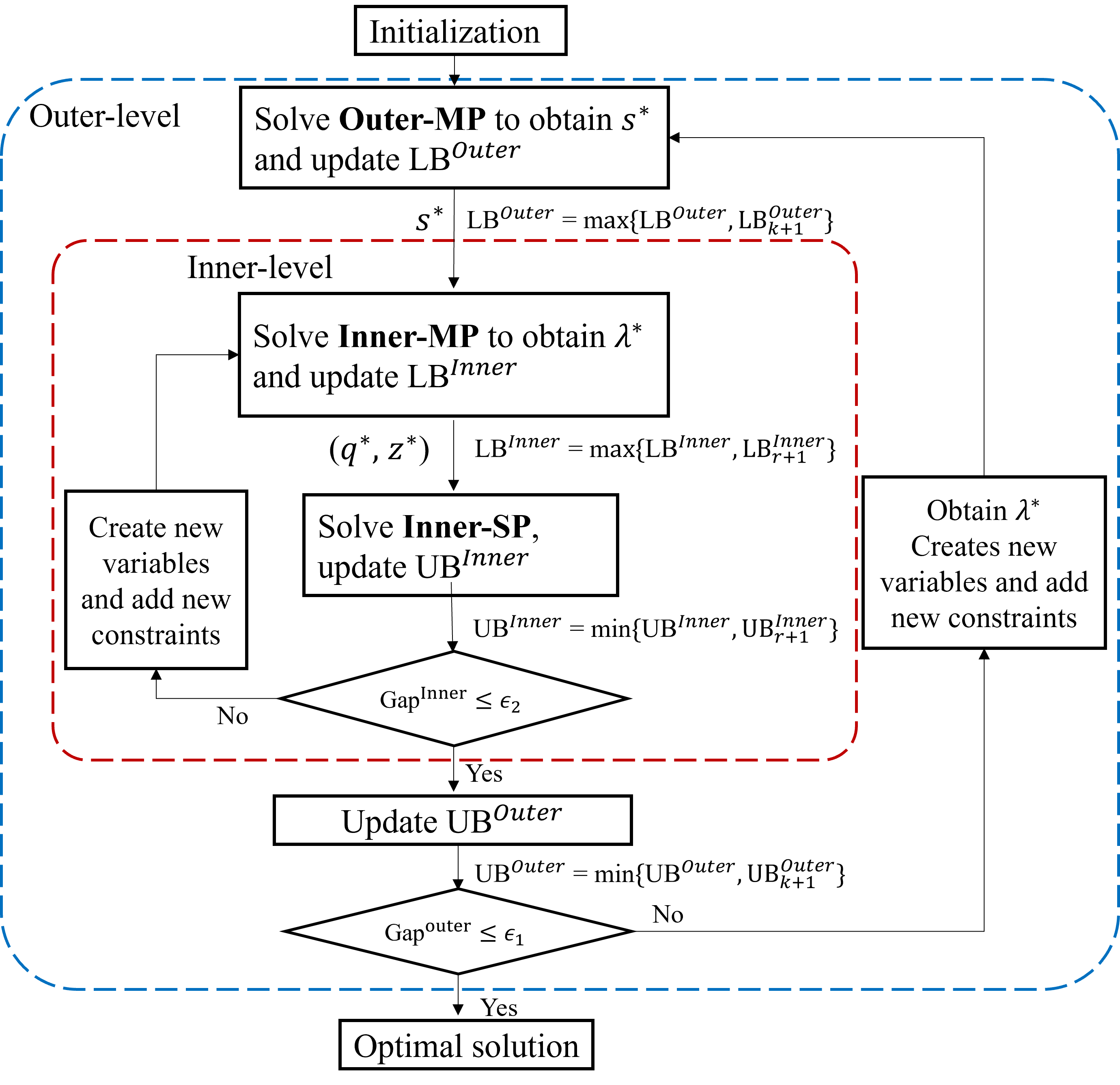} 
 			\caption{Flow diagram of the \textit{ROD} algorithm}
 	\label{fig:Flowchart}
\end{figure} 
 \vspace{-0.2cm}
 
\subsection{Optimality and Convergence Analysis}
The \textit{ROD} algorithm can be viewed as a combination of inner and outer-level iterative methods, depicted in the flowchart in Fig.~\ref{fig:Flowchart}.
Next, we prove that, given first-stage variable $\mathbf{\hat{s}}$, the inner level problem can converge to an immediate optimal within a finite number of steps ($R$) where $\text{UB}^{\text{inner}} = \text{LB}^{\text{inner}}$. The inner-level problem can be regarded as an oracle that provides the UB, subsequently relayed to the outer-level problem.

\begin{proposition}\label{prop:inner}
For any given set of first-stage decisions $\hat{\mathbf{s}}$, the inner-level procedure in \textbf{Algorithm~\ref{ALG:inner_CCG}} exhibits finite convergence. The number of iterations is bounded by $\mathcal{R}$, where $\mathcal{R}$ represents the total number of discrete points in the set $\Xi$.
\end{proposition}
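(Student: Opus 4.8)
The plan is to treat Algorithm~\ref{ALG:inner_CCG} as a column-and-constraint generation procedure over the finite set $\Xi$ of binary recourse points and to prove finite termination by showing that no binary point can be returned twice by \textbf{Inner-SP} without the stopping test being met. For a fixed binary point $\xi=(\bq,\bz)\in\Xi$ and a realization $\lambda$, write $V(\xi,\lambda):=\min_{\bx,\by}\{\text{continuous cost}\}$ over the feasible set of (\ref{Inner_SP}), so that $Q(\hat{\bs})=\max_{\lambda\in\mathcal{D}_2}\min_{\xi\in\Xi}V(\xi,\lambda)$. First I would record the bound structure. \textbf{Inner-SP} evaluates $\min_{\xi\in\Xi}V(\xi,\lambda^{*})$ at a single admissible $\lambda^{*}\in\mathcal{D}_2$, which can only underestimate the outer maximization, so the quantity in (\ref{inner_LB}) is a valid, non-decreasing lower bound on $Q(\hat{\bs})$. \textbf{Inner-MP} in (\ref{Inner-MP}) is the restriction of the trilevel problem to the enumerated subset $\Xi^{e}\subseteq\Xi$, i.e. $\tau^{*}=\max_{\lambda}\min_{\xi\in\Xi^{e}}V(\xi,\lambda)\ge Q(\hat{\bs})$; appending a point to $\Xi^{e}$ only adds a constraint of type (\ref{eq:thetaUBConstraint}), so $UB^{\text{inner}}=\tau^{*}$ is a valid, non-increasing upper bound. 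Hence $LB^{\text{inner}}\le Q(\hat{\bs})\le UB^{\text{inner}}$ at every iteration.

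The decisive step is the no-repetition claim: if \textbf{Inner-SP} returns a minimizer $(\bq^{*},\bz^{*})$ that already lies in the current $\Xi^{e}$, then the procedure has converged. Let $\lambda^{*}$ be the incumbent realization produced by the most recent \textbf{Inner-MP} solve, so that $\tau^{*}=\min_{\xi\in\Xi^{e}}V(\xi,\lambda^{*})$. \textbf{Inner-SP} at $\lambda^{*}$ computes $\min_{\xi\in\Xi}V(\xi,\lambda^{*})$, attained at $(\bq^{*},\bz^{*})$. If $(\bq^{*},\bz^{*})\in\Xi^{e}$, then the minimum over the full set $\Xi$ is realized by a point already in $\Xi^{e}$; since $\Xi^{e}\subseteq\Xi$ this pins $\min_{\xi\in\Xi}V(\xi,\lambda^{*})=\min_{\xi\in\Xi^{e}}V(\xi,\lambda^{*})=\tau^{*}$. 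The left-hand side is exactly the fresh lower bound of (\ref{inner_LB}), whence $LB^{\text{inner}}=\tau^{*}=UB^{\text{inner}}$; together with the always-valid $LB^{\text{inner}}\le UB^{\text{inner}}$ this forces equality and triggers the stopping criterion in Algorithm~\ref{ALG:inner_CCG}. Contrapositively, as long as the algorithm has not converged, each \textbf{Inner-SP} solve returns a binary point strictly outside the current $\Xi^{e}$.

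Finiteness then follows by counting. Every non-terminal iteration enlarges $\Xi^{e}$ by one previously unseen element of $\Xi$, and since $|\Xi|=\mathcal{R}$ the set can be augmented at most $\mathcal{R}$ times; should $\Xi^{e}$ ever equal $\Xi$, \textbf{Inner-MP} coincides with the full trilevel reformulation and $UB^{\text{inner}}=Q(\hat{\bs})=LB^{\text{inner}}$ exactly. Thus the loop cannot execute more than $\mathcal{R}$ times. I expect the main obstacle to be not the counting argument but justifying that the value function $V(\xi,\lambda)$ encoded through the dualized constraints of \textbf{Inner-MP} is identical to the continuous minimum actually solved in \textbf{Inner-SP}: this requires strong duality for the innermost linear program at each fixed $\xi$, which relies on feasibility and boundedness of $\Omega_2(\hat{\bs},\lambda)$ for all $\lambda\in\mathcal{D}_2$ (available here since the cloud absorbs any workload and the trading variables are bounded), and on the exactness of the McCormick set $\mathcal{M}_{(\delta,g,\sigma)}$ at the binary-extreme values $g_i^{t}\in\{-1,1\}$ used in the paper's worst-case characterization. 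Establishing this equivalence rigorously, so that $\tau^{*}$ genuinely equals $\max_{\lambda}\min_{\xi\in\Xi^{e}}V(\xi,\lambda)$, is where the real work lies.
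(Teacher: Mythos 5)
Your proof is correct, and it follows the same ``repetition implies $UB^{\text{inner}}=LB^{\text{inner}}$'' template as the paper, but the object you track is different and the difference matters. The paper's proof argues that if the worst-case realization $\lambda^{*}$ returned by \textbf{Inner-MP} repeats, then the bound from \eqref{eq:thetaUBConstraint} combined with the update \eqref{inner_LB} forces $LB^{\text{inner}}\geq RHS \geq UB^{\text{inner}}$; it then asserts the iteration bound $\mathcal{R}$. Strictly speaking, non-repetition of $\lambda^{*}$ only bounds the iteration count by the number of distinct candidate realizations, and the paper never connects that to $|\Xi|$. You instead track the binary recourse point $(\bq^{*},\bz^{*})\in\Xi$ generated by \textbf{Inner-SP}: each non-terminal iteration must produce a point outside the current $\Xi^{e}$, since otherwise $\min_{\xi\in\Xi}V(\xi,\lambda^{*})=\min_{\xi\in\Xi^{e}}V(\xi,\lambda^{*})=\tau^{*}$ collapses the gap. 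This is the standard finiteness argument for nested column-and-constraint generation with integer recourse, and it is the version that actually delivers the stated bound $\mathcal{R}=|\Xi|$, so your route is cleaner and better matched to the proposition. Your closing caveat is also well placed: both your argument and the paper's rely on the dualized constraints in \textbf{Inner-MP} exactly representing the value function $V(\xi,\lambda)$, which requires strong duality of the innermost LP (feasibility is guaranteed by the uncapacitated cloud) and exactness of the McCormick relaxation at $g_{i}^{t}\in\{-1,1\}$; the paper asserts both without proof, so flagging this as the real remaining work is fair rather than a gap in your argument specifically.
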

\begin{proof}
This proposition can be shown through contradiction, where the repetition of any $\lambda^*$ implies $UB^{\text{inner}} = LB^{\text{inner}}$. Let $\lambda^{*}$ represent the worst-case uncertainty at iteration $r$ for \textbf{Inner-MP}, and let the optimal decisions obtained by solving \textbf{Inner-SP} be denoted as $(\mathbf{q}^{*}, \mathbf{x}^{*}, \mathbf{z}^{*}, \mathbf{y}^{*}, \eta^{*})$. According to (\ref{UB_update}), we have $UB^{\text{inner}} = Q(\hat{y},\hat{z}) = \theta^{*}$. Thus, \eqref{eq:thetaUBConstraint} implies that
\begin{align}
\label{valid_ine}
    & UB^{\text{inner}} \leq \sum_{j,t} \sum_{m \in \mathcal{J} \setminus \{j\}} h_{m,j} q_{m,j}^{t,*} + \sum_{j,t} h_{0,j} q_{0,j}^{t,*} + \sum_{j,t} \varsigma_j^{t} z^{t,*}_{j} 
    \nonumber\\
    & + \sum_{j,t} \delta \bigg[ e_{j}^{t}  y_{j,t}^{B,*} - a_{j}^{t} y_{j,t}^{S,*} \bigg] + \sum_{t} \bigg[ \delta e_{0}^{t} y_{0,t,r}^{B,*} - a_{0}^{t} y_{0,t}^{S,*} \Bigg]\nonumber \\ 
    & +  \sum_{i,t} c_{i,0} x_{i,0}^{t,*} +  \sum_{i,j,t} c_{i,j} x_{i,j}^{t,*} = RHS
\end{align}
If $\lambda^*$ is repeated, i.e., $\lambda^{*}$ is the worst-case uncertainty at iteration $r+1$ for \textbf{Inner-MP}, the MP at iteration $r+1$ is identical to the MP at iteration $r$. From the update in \eqref{inner_LB} for the lower bound (LB), we have: $LB^{\text{inner}} \geq RHS\geq UB^{\text{inner}}$, implying $LB^{\text{inner}} = UB^{\text{inner}}$. Consequently, Algorithm~\ref{ALG:inner_CCG} converges in a finite number of iterations bounded by $\mathcal{R}$.
\end{proof}


\begin{proposition}
    The outer-level procedure in \textbf{Algorithm~\ref{ALG:outer_loop}} converges an optimal solution of the proposed model $(\mathcal{P}_1)$ in a finite number of iterations. The number of iterations is bounded by $K$, where $K$ is the number of extreme points of $\mathcal{D}^{*}$.
\end{proposition}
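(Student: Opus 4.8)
The plan is to mirror the structure of the proof of Proposition~\ref{prop:inner}, combining a finiteness argument with a bounding argument for the lower and upper bounds. First I would establish that the outer loop is a column-and-constraint-generation (CCG) procedure over the extreme points of $\mathcal{D}_2$: at each iteration the \textbf{Outer-MP} in (\ref{MP1}) is exactly the relaxation of the full reformulation (\ref{reformulation1}) restricted to the subset of extreme points $\{\lambda^{1,*},\ldots,\lambda^{k,*}\}$ generated so far. Since this is a relaxation, its optimal value $LB^{\text{outer}}$ in (\ref{outer_LB}) is a valid lower bound for $(\mathcal{P}_1)$, and because constraints are only added (never removed) across iterations, the sequence of lower bounds is non-decreasing. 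Concurrently, by invoking Proposition~\ref{prop:inner}, the inner loop terminates with the exact value $Q(\hat{\mathbf{s}})$ for the worst-case realization given the current first-stage decision, so the update (\ref{UB_outer}) yields a valid, non-increasing upper bound $UB^{\text{outer}}$.

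The core of the argument is the finiteness claim, which I would establish by contradiction exactly as in Proposition~\ref{prop:inner}. The key observation is that the inner-loop oracle returns an extreme point $\lambda^{k+1,*}$ of $\mathcal{D}_2$, and $\mathcal{D}^{*}$ contains only finitely many such extreme points, namely $K$. I would show that no extreme point can be generated twice without triggering termination: suppose the worst-case realization $\lambda^{k+1,*}$ returned at iteration $k+1$ coincides with some $\lambda^{l,*}$ already present in \textbf{Outer-MP}. Then the constraint corresponding to $\lambda^{k+1,*}$ is already enforced in the master problem, so the master's optimal value $\eta^{k+1,*}$ already accounts for the recourse cost under this realization, giving $LB^{\text{outer}} = \sum_{j,t}\delta(p_j^t s_j^{t} + p_0^t s_0^{t}) + \eta^{k+1,*} \geq UB^{\text{outer}}$. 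Combined with the always-valid inequality $LB^{\text{outer}} \leq UB^{\text{outer}}$, this forces $LB^{\text{outer}} = UB^{\text{outer}}$ and the termination criterion in \textbf{Algorithm~\ref{ALG:outer_loop}} is met. Hence each iteration that does not terminate must contribute a distinct extreme point of $\mathcal{D}^{*}$, and since $|\mathcal{D}^{*}| = K$, the procedure terminates in at most $K$ iterations.

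Finally, I would argue optimality at termination. When the loop stops with $UB^{\text{outer}} = LB^{\text{outer}}$, the first-stage decision $\mathbf{s}^{*}$ is feasible for $(\mathcal{P}_1)$ with objective value equal to $UB^{\text{outer}}$ (since the upper bound is attained by a genuine feasible pair of first-stage reservation and worst-case recourse), while $LB^{\text{outer}}$ is a valid lower bound on the optimum; the squeeze $LB^{\text{outer}} = UB^{\text{outer}}$ then certifies optimality. I would also note that the equivalence of $(\mathcal{P}_1)$ with (\ref{reformulation1}) over \emph{all} extreme points, already stated in the excerpt, underwrites the claim that restricting to the finitely many generated extreme points suffices once the bounds coincide.

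The main obstacle I anticipate is rigorously justifying the validity of the upper bound, i.e., that $Q(\hat{\mathbf{s}})$ computed by the inner loop is the \emph{exact} worst-case second-stage cost for the fixed $\hat{\mathbf{s}}$ rather than merely a bound. This rests entirely on Proposition~\ref{prop:inner} delivering an exact inner optimum, so the argument must explicitly cite that the inner loop is run to convergence ($UB^{\text{inner}} = LB^{\text{inner}}$) before the outer upper bound is updated. A secondary subtlety is confirming that the extreme points returned are genuinely vertices of $\mathcal{D}_2$ and not repeated by the McCormick-linearized \textbf{Inner-MP}; this follows from the earlier remark that the worst case is attained at $g \in \{-1,1\}$, ensuring each generated $\lambda^{k+1,*}$ is an extreme point of the convex compact set $\mathcal{D}_2$.
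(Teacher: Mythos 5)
Your proposal is correct and follows essentially the same route as the paper's proof: both argue by contradiction that a repeated extreme point $\lambda^{*}$ leaves \textbf{Outer-MP} unchanged, forcing $LB^{\text{outer}} \geq UB^{\text{outer}}$ and hence termination, so each non-terminating iteration must generate a distinct extreme point of the finite set $\mathcal{D}^{*}$, bounding the iteration count by $K$. Your write-up is somewhat more explicit than the paper's about the validity and monotonicity of the bounds and about the optimality certificate at termination, but these are elaborations of the same argument rather than a different approach.
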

\begin{proof}
The proof proceeds in a similar manner to the proof of Proposition~\ref{prop:inner}. According to Proposition \ref{prop:inner}, the inner-level problem converged to its global optimum within maximum $\mathcal{O}(R)$, where $\mathcal{R}$ represents the total number of discrete points in $\Xi$. Note that $\mathcal{D}_{2}$ has a limited number of $K$ elements. To prove this, we only need to show that before convergence, the subproblem always outputs a distinct extreme point at every iteration. In other words, if any extreme point ($\lambda$) is repeated, then $LB = UB$, which means convergence. 

Assume $(\mathbf{s}^{*}, \eta^{*})$ is an optimal solution to the \textbf{Outer-MP} and $(\lambda^{*},\bq^{*},\bx^{*},\bz^{*},\by^{*})$ is an optimal solution to the whole inner problem in iteration $l$. The extreme point $\lambda^{*}$ has appeared in the previous iterations. From the definition of the UB in
\begin{align}
    UB^{\textit{outer}} = \sum_{j,t} \Delta^{T} \big( p_j^{t} s_j^{t,*} + p_0^{t} s_0^{t,*} \big) + Q(\bs^{*})
\end{align}
Since $\lambda^{*}$ appeared in a previous iteration, the cuts related to $\lambda^{*}$ will be added to \textbf{Outer-MP}. Thus, the \textbf{Outer-MP} in iteration $l+1$ is identical to \textbf{Outer-MP} in iteration $l$. Hence, $(\mathbf{s}^{*}, \eta^{*})$ is also the optimal solution to the \textbf{Outer-MP} in iteration $l+1$. Thus, we have: 
\begin{align}
    LB^{\textit{outer}} \geq \sum_{j,t} \delta  p_{j}^{t} s_{j}^{t} + \sum_{t}  \delta p_{0}^{t} s_{0}^{t} + \mathbf{\eta}^{*}
\end{align}
On the other hands, since $(\lambda^{*},\bq^*,\bx^*,\bz^*,\by^*)$ is an optimal solution to the inner problem in iteration $l$. In other words, the valid inequality (\ref{valid_ine}) holds true for every $(\lambda^{*},\bq^*,\bx^*,\bz^*,\by^*)$. From , we have $LB^{outer} \geq UB^{\textit{outer}}$, which implies $LB^{outer} = UB^{\textit{outer}}$. Thus, any repeated extreme points $\lambda^{*}$ implies convergence. Since 
$\mathcal{D}_2$ is a finite set with $K$ elements, the proposed algorithm is guaranteed to converge in $\mathcal{O}(K)$.
\end{proof}
\noindent
Note that \textbf{Algorithm~\ref{ALG:outer_loop}} and \textbf{Algorithm~\ref{ALG:inner_CCG} } typically converge within a few iterations, as demonstrated in the numerical results.

\section{Numerical Results}
\label{Numerical_results}
\subsection{Simulation Setting}
We consider an EC system consisting of $20$ APs and $10$ ENs (i.e., $I=20$, $J=10)$. Similar to \cite{Duong_iot,FairRO_EC}, we generate a random scale-free edge network topology with $100$ nodes using the Barabasi-Albert (BA) model. A subset of nodes is then selected to create an edge network for our analysis. From the resulting network, we can obtain the network hops between nodes $H_{i,j}$ and $H_{i,0}$. The link delay between adjacent nodes in the BA network is randomly generated within the range of $[2,10]$ ms. We employ Dijkstra's shortest path algorithm to compute the network delay $d_{i,j}$ between AP $i$ and EN $j$. 

Based on the pricing 
of \textit{m5d.xlarge} Amazon EC2 instances\footnote{Amazon EC2: https://aws.amazon.com/ec2/pricing/}, the unit resource price $p_j^{t}$ at the ENs is randomly generated within the range $[0.08,0.10] \$$/vCPU.h, while the unit resource price $p_0^{t}$ at the cloud is set to $0.06 \$$/vCPU.h. 
The unit ``\textit{buy-more}" price at EN $j$  is uniformly generated within the interval $[0.08,0.15]$ \$/vCPU.h, while ``\textit{buy-more}" price at the cloud is uniformly generated within the interval $[0.03,0.05]$\$/vCPU.h. Similarly, the unit ``\textit{sell-back}" price at EN $j$,  $a_j^{t}$, takes values randomly from the interval $[0.01, 0.03]$\$/vCPU.h and the unit ``\textit{sell-back}" price at cloud is within the interval $[0.01, 0.02]$\$/vCPU.h. 
The service placement and storage costs $(\varsigma_j^{t})$ are randomly generated between $[0.2,0.3] \$ $. The service placement cost $f_j^{t}$  at EN $j$ at time $t$  is generated from $U[0.1,0.3]$. Additionally, the service download cost $h_{m,j}^{t}$ between ENs is randomly chosen from the interval $[0.05, 0.08]$, whereas the cost between an EN and the cloud is generated within the range $[0.1, 0.3]$. The capacities $(C_j, \forall j)$ of the ENs are set randomly assigned from the set $\{32,48,64\}$ vCPUs. For users' demand, we conduct simulations under two settings. Similar to prior studies \cite{data1,data2,data3}, we leverage data traces provided by Shanghai Telecom \footnote{https://github.com/BuptMecMigration/Edge-Computing-Dataset}, encompassing statistical results from three major cities in China (e.g., Beijing, Shanghai, and Guangzhou). To adapt this data for our EC network topology comprising  $20$ APs and $10$ ENs (i.e., $I=20$, $J=10)$, we segment areas and demand based on geographical districts within those cities and their population information. 

In the \textit{default setting}, we assume that the service is initially unavailable on any EN (i.e., $z_{j}^{\sf 0} = 0$, for all $j$). The system parameters are  $\rho = 0.0001, \Gamma = 5, \beta = 0.02, b = 0.02, w = 0.02$. These parameters will be subject to variation during our sensitivity analyses. All experiments are implemented in MATLAB using CVX and Gurobi on a desktop equipped with an Intel Core i7-11700KF and 32 GB of RAM. Some important cost parameters are summarized in TABLE \ref{table:parameter_setting}.

\begin{table}[t!]
\centering
\begin{tabular}{|l|l|}
\hline
\textbf{Parameters} & \textbf{Values}          \\ \hline
Network size (AP, EN): $(I,J)$ & $(20,10)$          \\ \hline
Service installation cost $f_j^{t}$ & $U[0.1,0.15] \$$ \\ \hline 
Service download cost $h_{m,j}^{t}$ (edge) & $U[0.05,0.08] \$$          \\ \hline
Service download cost $h_{0,j}^{t}$ (cloud) & $U[0.1,0.3]  \$$          \\ \hline
Unit reserved price (edge) $p_{j}^{t}$ & $U[0.08,0.15] $\$$ ~\textit{per hour} $          \\ \hline
Unit reserved price (cloud) $p_{0}^{t}$ & $0.06$\$$ ~\textit{per hour}$           \\ \hline
On-spot price $e_j^{t}$ (edge) & $U[0.1,0.15] $\$$ ~\textit{per hour} $  \\ \hline
On-spot price $e_0$ (cloud) & $U[0.03,0.05] $\$$ ~\textit{per hour} $  \\ \hline
``\textit{Sell-back}" price $a_{j}^{t}$ (edge) & $U[0.01,0.03] $\$$ ~\textit{per hour} $  \\ \hline
``\textit{Sell-back}" price $a_0$ (cloud) & $U[0.01, 0.02] $\$$ ~\textit{per hour} $  \\ \hline
Resource capacity $(C_j, \forall j)$  & $\{32,48,64\}$ vCPU     \\ \hline
Network link delay $d_{i,j}$ & $U[2, 10]$ ms \\ \hline 
Delay penalty $(\rho)$  & $0.0001$ (\$ per ms)\\ \hline
Uncertain budget $(\Gamma)$ & $\Gamma = 5$ \\ \hline 
\end{tabular}
\caption{Simulation data}\label{table:parameter_setting}
\vspace{-0.3cm}
\end{table}

\subsection{Time series analysis to construct $\mathcal{D}_2$}
\label{time_series_process}
To estimate the model parameters of the dynamic uncertainty set, we aim to fit the historical data for each area in the following time series model:
\begin{subequations}
\begin{align}
    & \lambda_i^t = \bar{\lambda}_i^{t} + \Tilde{\lambda}_i^{t}, ~~ \forall i,t \\
    & \Tilde{\lambda}_i^{t} = \sum_{s = 1}^{L} A_i^{s} \Tilde{\lambda}_i^t + \epsilon_{i}^{t}, ~~\forall i,t.
\end{align}
\end{subequations}

After aggregating the data from the initial ten days of November and computing the averages, a distinct seasonal demand pattern becomes evident when analyzing both weekly and daily trends. This pattern consistently exhibits peak demand hours occurring between $10$ AM and $3$ PM, as depicted in Fig. \ref{fig:hour_pattern} and \ref{fig:week_pattern}, mirroring typical human activities throughout the day. Once these seasonal patterns are identified, the forecasted demand $\lambda_{i}^{t}$ can be estimated. In our scenario, daily and semi-daily seasonality can be leveraged. In this context, using a $20$-minute time interval, we would consider:
\begin{align}\label{forecast_model}
    \Bar{\lambda}_i^{t}  &= \varphi_i^{1} + \varphi_i^{2} \cos\left(\frac{2\pi t}{24 \times 3}\right) + \varphi_i^{3} \sin\left(\frac{2 \pi}{24 \times 3}\right) \nonumber\\
    & + \varphi_i^{4} \cos\left(\frac{2 \pi t}{12 \times 3}\right) + \varphi_i^{5} \sin\left(\frac{2 \pi t}{12 \times 3}\right).
\end{align}

The parameters $\varphi_i^{1},\cdots,\varphi_i^5$ can be estimated using linear regression techniques, as outlined in \cite{Time_series}. Notably, the deviation $\Tilde{\lambda}_i^{t}$ follows a multivariate auto-regressive (AR) process of order $L$. This involves determining the optimal time lag $L$ and AR coefficients $A_i^{s}$. In this model, for each area $i$, the error term $\mathbf{\epsilon}_{t}$ is assumed to follow a normal random distribution with mean $0$ and covariance matrix $\Sigma$. Modern time series analysis tools, such as Matlab, Python, or R, facilitate directly estimating parameters $A_i^{s}$. Additionally, the parameter $B_i$ can be derived using the Cholesky decomposition of the covariance matrix $\Sigma$.

\begin{figure}[h!]
	\subfigure[Hourly pattern]{
		  \includegraphics[width=0.246\textwidth,height=0.10\textheight]{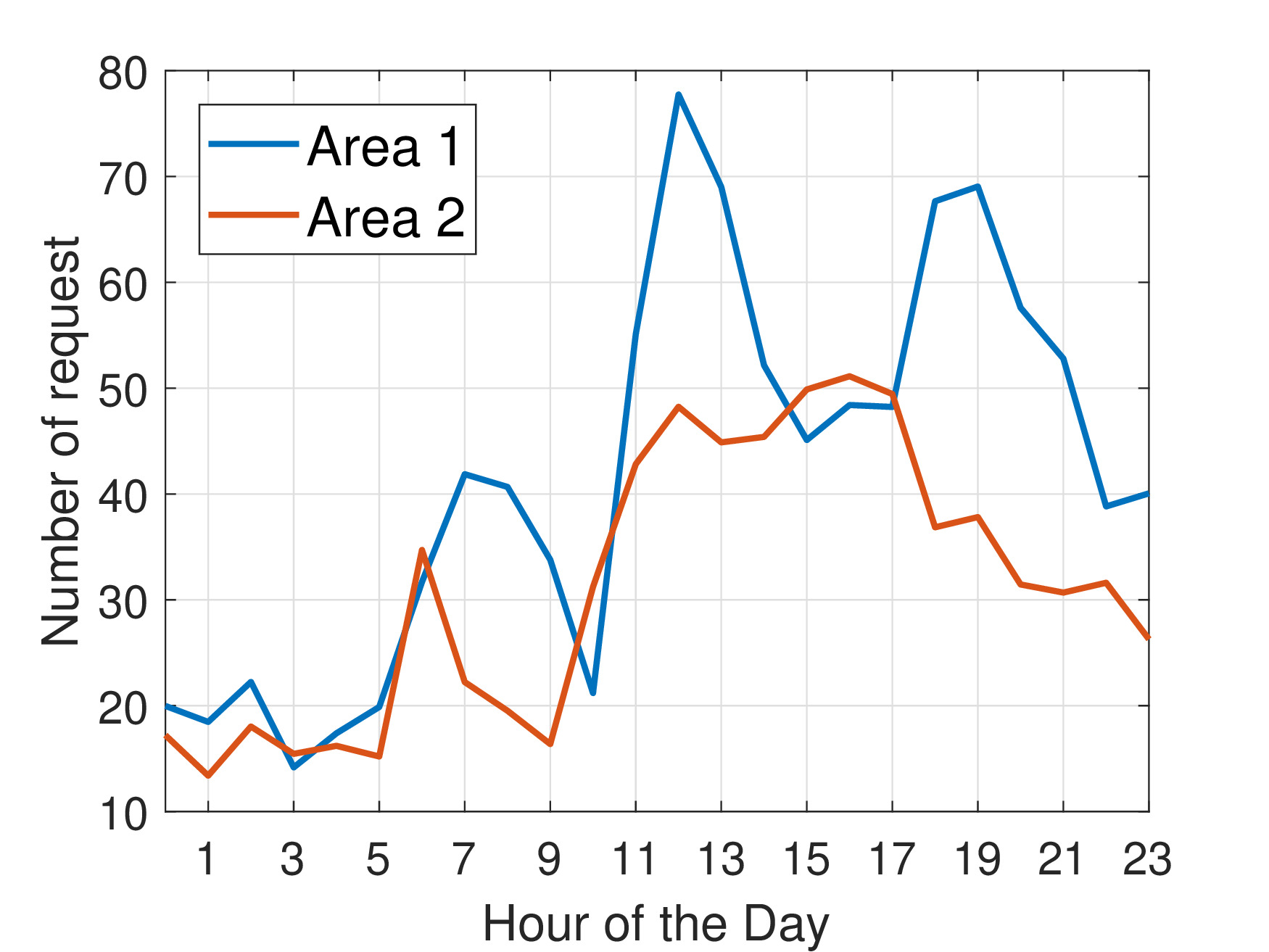}
	    \label{fig:hour_pattern}
	}   \hspace*{-2.1em} 
		 \subfigure[Daily pattern]{
	     \includegraphics[width=0.246\textwidth,height=0.10\textheight]{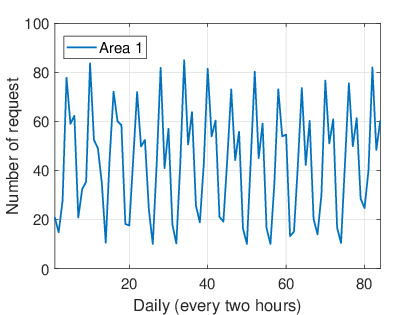}
	     \label{fig:week_pattern}
	}  \vspace{-0.2cm}
	\caption{Time series analysis}
	\vspace{-0.4cm}
\end{figure}

\vspace{-0.3cm}
\subsection{Sensitivity Analysis}
\label{Sensitvity_analysis}
This section 
assesses the impact of key system parameters on the optimal solution. These parameters include the uncertainty factor $\Gamma$, demand deviation $\alpha$, and delay penalty $\rho$. To understand the influence of varying cost parameters, scaling factors $\mathbf{\Psi} = (\Psi_f, \Psi_h, \Psi_p, \Psi_e)$ are introduced. These factors scale up or down the cost parameter $\Psi$ from its default setting, with $\mathbf{\Psi} = 1$ representing the default parameter configuration. For example, by multiplying the base values of $h$ 
by $\mathbf{\Psi}_h$, 
download cost $(h)$ can be increased or decreased accordingly.

\noindent
\textbf{\textit{1) Impacts of the uncertainty set:}} 
Fig. \ref{fig: rho_Gamma} illustrates the impact of the uncertainty set and delay penalty $\rho$ on the optimal solution. Recall that $\Gamma$ governs the conservativeness of the uncertainty set. A scaling factor, $\Psi_{\rho}$, is introduced to adjust the default setting (i.e., $\Psi_{\rho} \in [1,9]$). The figure shows that the total cost increases as 
$\Gamma$ increases since  
the uncertainty set $\mathcal{D}_2$ expands. Noticeably, 
$\rho$ has a considerable influence on the total cost because 
network delay costs are proportional to the allocated workload. With a higher delay penalty $\rho$, the SP adopts a more stringent approach towards service delivery, leading to higher operational costs. 

\vspace{-0.3cm}
\begin{figure}[h!]
\centering         \includegraphics[width=0.30\textwidth,height=0.12\textheight]{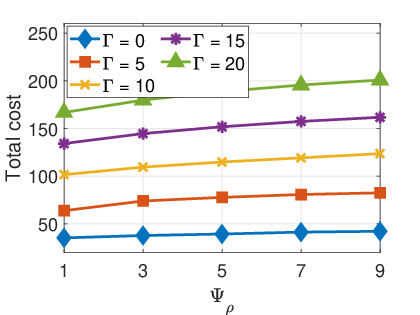}
	   \vspace*{-0.2cm} 
	\caption{Total cost: varying $\rho$ and $\Gamma$}
        \label{fig: rho_Gamma}
	\vspace{-0.3cm}
\end{figure}

\noindent
\textbf{\textit{2) Impacts of cost parameters:}} 
Fig.\ref{fig: pe_cost} - \ref{fig: pe_adjust} demonstrate the impacts of resource reservation cost ($p$) and resource adjustment cost ($e$). Initially, the total cost and payment rise and then reach a plateau after a certain point. This trend occurs as the SP may prefer dynamically procuring resources during the actual operational stage, especially when $p$ is higher. The saturation point indicates that the procured resources are sufficient to meet all user demands. Conversely, with increasing values of $e$, the advantage of dynamic resource adjustment is marginalized. When $e$ is approximately equals 
the resource reservation cost, there is no incentive for resource adjustment, as further illustrated in Fig.\ref{fig: pe_adjust}. Higher values of $\Psi_e$ make the SP more cautious, leading to a delayed point of change in strategy. Additionally, Fig. \ref{fig: fh_cost} reveals that the total cost escalates with increasing service installation cost ($f$) and download cost ($h$). When the cost of downloading services ($h$) is low, the SP is more inclined to actively monitor each EN and strategically select ENs with installed services for placement.

\begin{figure}[h!]
	\subfigure[Total cost: varying $p$ and $e$]{
  \includegraphics[width=0.246\textwidth,height=0.10\textheight]{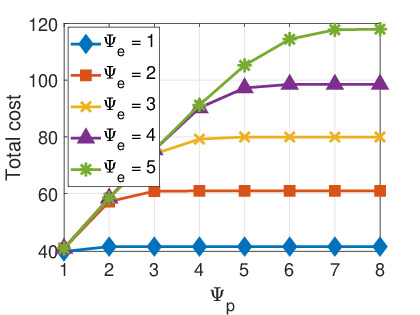}
  	    \label{fig: pe_cost}
	}   \hspace*{-2.1em} 
		 \subfigure[Payment: varying $p$ and $e$]{
    \includegraphics[width=0.246\textwidth,height=0.10\textheight]{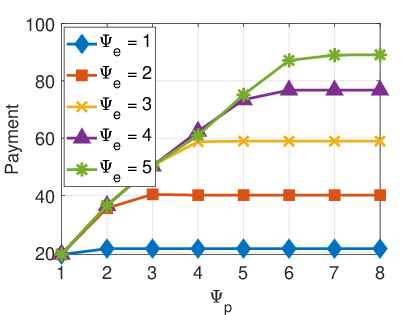}
	 	     \label{fig: pe_payment}
}  \vspace{-0.2cm}
	\subfigure[Adjust: varying $p$ and $e$]{
    \includegraphics[width=0.246\textwidth,height=0.10\textheight]{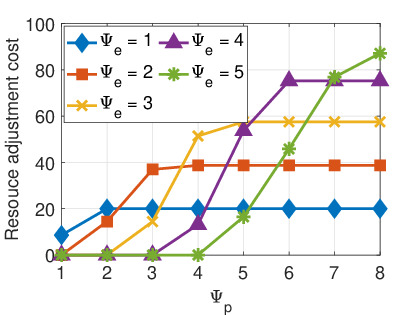}
	 	     \label{fig: pe_adjust}
} \hspace*{-2.1em} 
	\subfigure[Total cost: varying $h$ and $f$]{
    \includegraphics[width=0.246\textwidth,height=0.10\textheight]{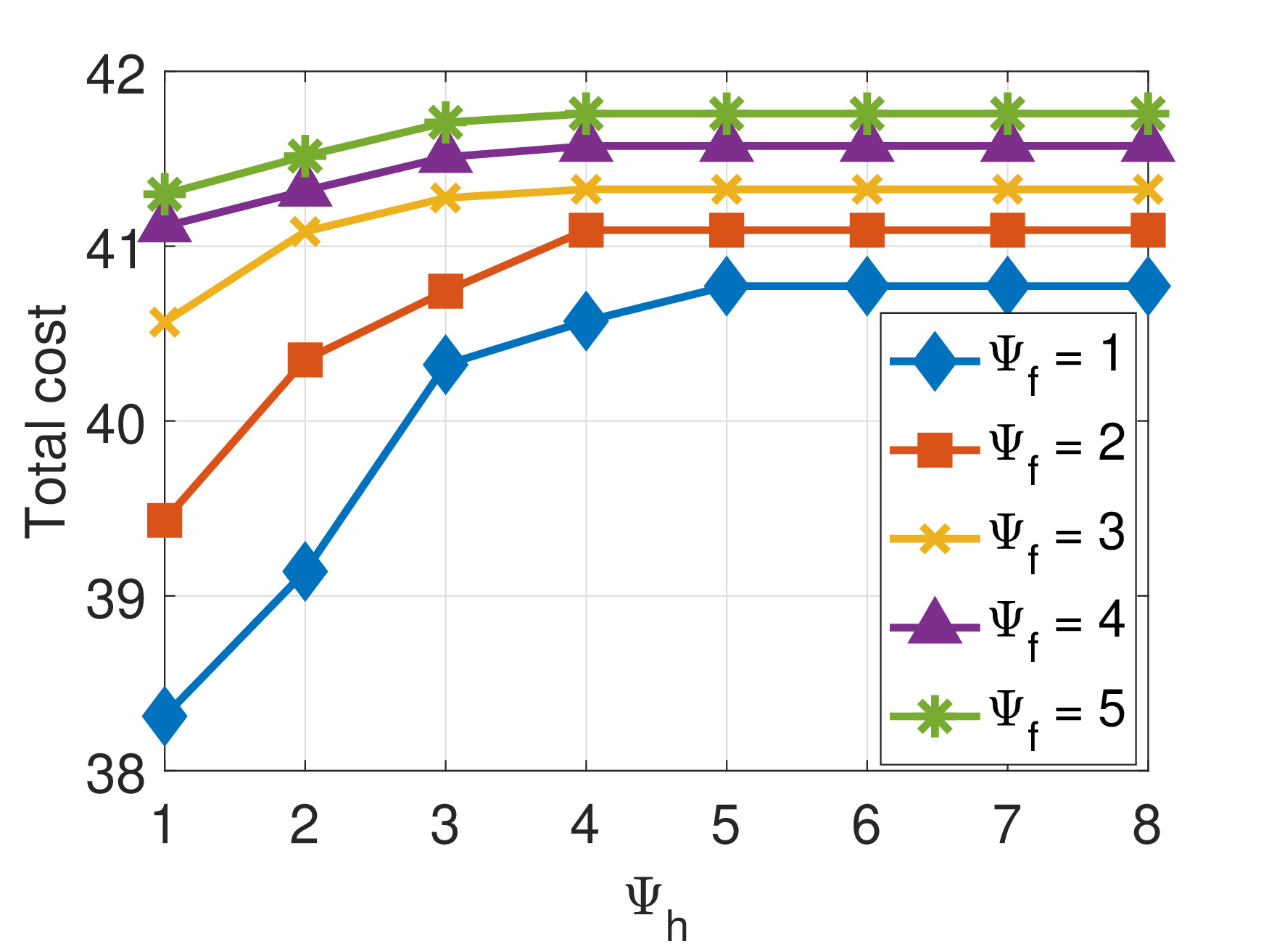}
	 	     \label{fig: fh_cost}
} 
 \caption{The impact of cost parameters}
	\vspace{-0.3cm}
\end{figure}

\noindent
\textbf{\textit{3) Convergence and running time analysis of \textit{ROD}}:}
In our simulations, $\epsilon_1$ and $\epsilon_2$ denote the optimality gaps for the outer and inner loop problems, respectively, both set at $0.1\%$. Firstly, we examine the convergence properties of our proposed algorithm (\textit{ROD}). Figure \ref{fig: outerloop} illustrates the convergence behavior of \textit{ROD} across different values of $\Gamma$. Our observations indicate that the algorithm typically converges within a few iterations in both scenarios. Figure \ref{fig: innerloop} displays the final iterations before the convergence of the outer loop problem.

\begin{figure}[h!]
	\subfigure[$\Gamma = 10$: Outer loop]{
		  \includegraphics[width=0.246\textwidth,height=0.10\textheight]{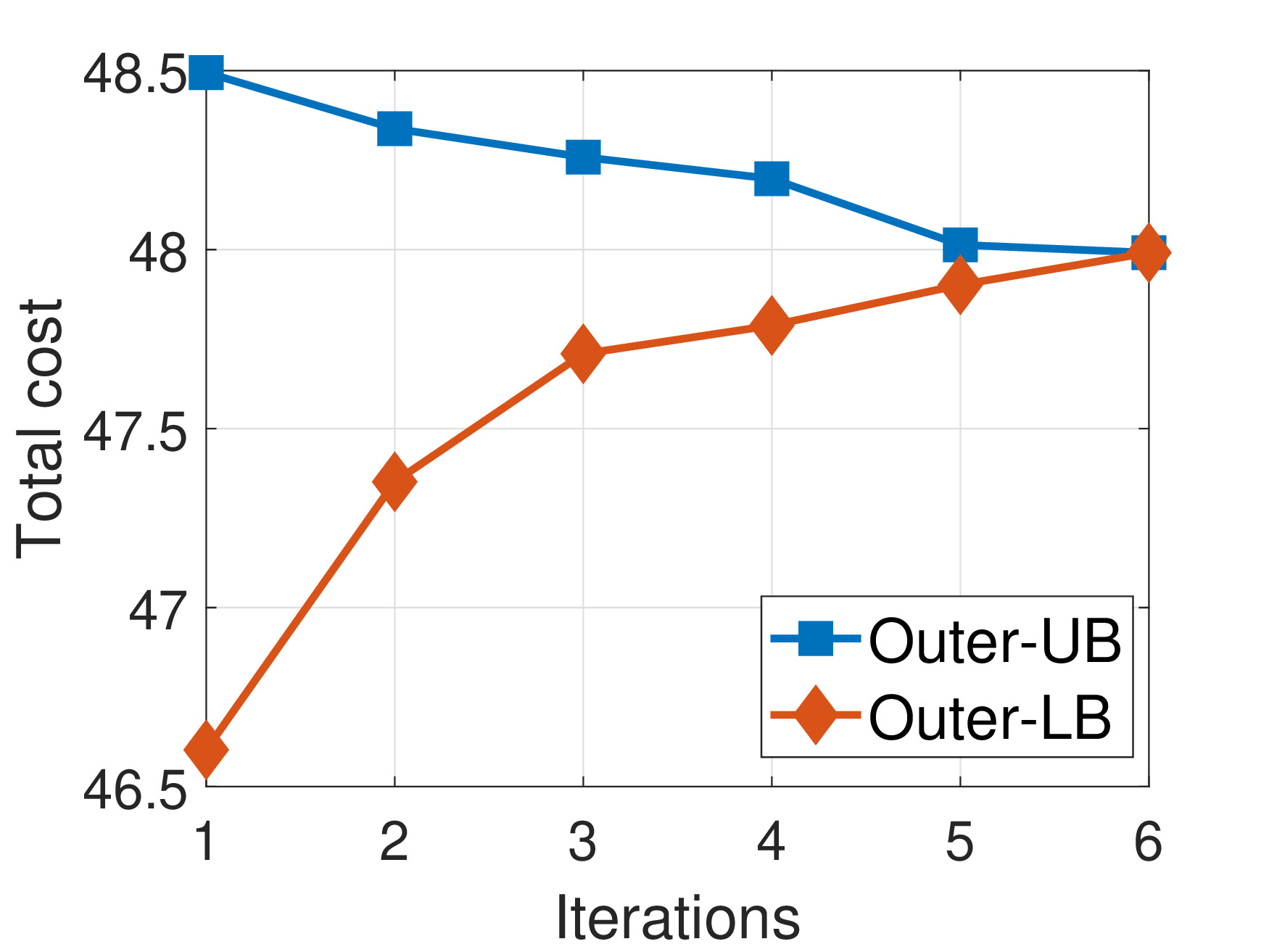}
	    \label{fig: outerloop}
	}   \hspace*{-2.1em} 
		 \subfigure[Inner-loop convergence]{
	     \includegraphics[width=0.246\textwidth,height=0.10\textheight]{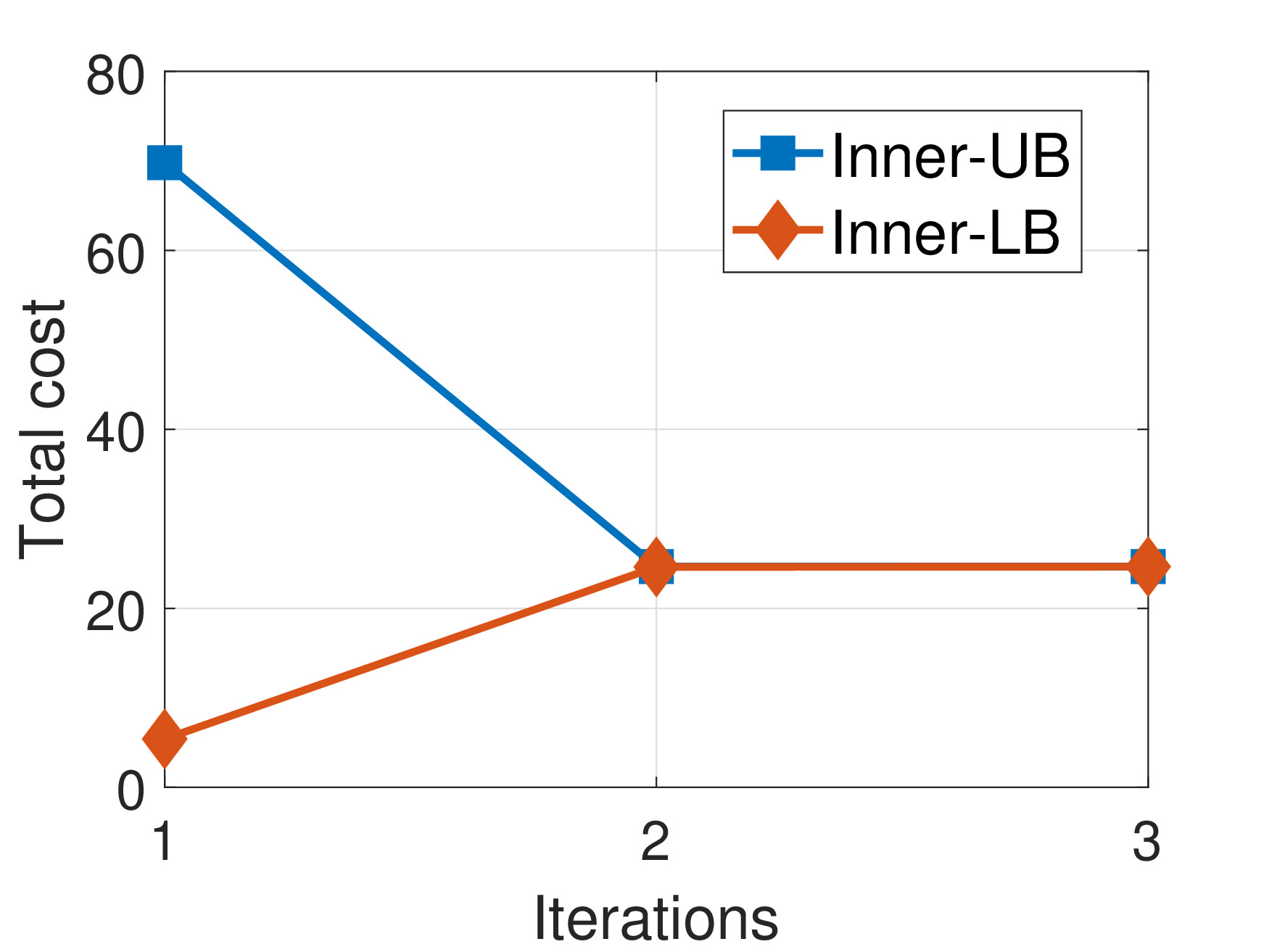}
	     \label{fig: innerloop}
	}  \vspace{-0.2cm}
	\caption{Convergence property}
     \vspace{-0.4cm}
\end{figure} 

For our runtime analysis, we calculate the average computational time across $100$ different problem instances for each problem size. Table \ref{tab: time_table} shows the time taken by our proposed model for various problem sizes and 
uncertain budget $\Gamma$. The increased computational time with larger values of $\Gamma$ is due to the expanded uncertainty set encompassing more extreme points. Therefore, the \textbf{Outer-MP} experiences increased complexity due to the higher dimensionality of the problem, generating more variables in each iteration. Note that the core of the underlying problem is based on planning. The resource reservation decisions are typically made in advance, a day/week/month ahead. However, considering the dynamic nature of demand, which can undergo significant shifts within short intervals, 
the SP may need to make operational decisions more frequently, such as every hour. This approach allows the SP to integrate the latest information and adjust strategies based on real-time conditions. In addition, we can dynamically adjust the optimality between UB and LB to speed up computation, especially for large-scale networks.

\begin{table}[h!]
\centering
\begin{tabular}{|c|c|c|}
\hline
\textbf{Network size}         & $\Gamma$ & D--ARO--DUS \\ \hline
\multirow{2}{*}{$I=20, J=10$} & $5$      &  $263.11$s $(0.1 \%)$         \\ \cline{2-3} 
                              & $10$     & $453.02$s $(0.1 \%)$          \\ \hline
\multirow{2}{*}{$I=20, J=20$} & $5$      &      $447.68$s $(0.1 \%)$  \\ \cline{2-3} 
                                & $10$    & $930.51$s $(0.1 \%)$ \\ \hline
\multirow{3}{*}{$I=20, J=30$} & \multirow{2}{*}{$5$} & $1235.19$s $(0.1 \%)$         \\
                              &                      & $519.19$s $(0.5 \%)$         \\ \cline{2-3} 
                              & \multirow{2}{*}{$10$} & $2110.21$s $(0.1 \%)$        \\
                              &                       &  $946.19$s $(0.5 \%)$            \\ \hline
\end{tabular}
\caption{Running time analysis}
\label{tab: time_table}
\vspace{-0.4cm}
\end{table}

\vspace{-0.2cm}
\subsection{Performance Evaluation}

We will compare the performance of our proposed \textit{D-ARO-DUS} model $(\mathcal{P}_1)$ against the following benchmarks:
\begin{enumerate}
    \item \textit{D-ARO-SUS:} We consider the proposed ARO problem with a static uncertainty set (SUS), where the demand of each AP at each time slot is treated as independent within the SUS. In this context, there is no incorporated correlation sequence, such as (\ref{spatialtemperal}), to capture the spatial-temporal dynamics in the uncertainty set. 
    \item \textit{S-ARO:} (See \textit{Appendix}~\ref{S_ARO_section}) The S-ARO model, as introduced in \cite{Duong_iot}, represents a static ARO framework wherein the SP makes service placement and resource procurement under the worst-case scenarios before the static uncertainty is disclosed. These decisions remain fixed during the operational stage once they are established.
\end{enumerate}
The objective of the SP is to determine optimal planning and operational decisions for the entire horizon. In the static model (\textit{S-ARO}), decisions, such as service placement $(\bz,\bq)$ and resource reservation $(\bs)$ are pre-determined and remain fixed during the operational stage. Recall that our assumption is no available ENs with installed service at the beginning, which means that the SP must download service from the cloud at $t = 1$. For a fair comparison, the service placement cost in \textit{S-ARO} model is calculated by $f_j^{t} + h_0$. This implies that the SP is limited to downloading the service solely from the remote cloud, incurring service download cost $h_0$. In the dynamic models (\textit{D-ARO-SUS} and \textit{D-ARO-DUS}), the SP sets resource reservation $(\bs)$ before uncertainty disclosure, while other decisions, such as service placement and resource adjustment, can be modified during the operational stage. These dynamic models are collectively denoted as $\textit{D-ARO}$. The only distinction between \textit{D-ARO-SUS} and \textit{D-ARO-DUS} lies in whether the uncertainty set captures the spatial-temporal relationship of the demand. Our evaluation will concentrate on three key performance metrics: \textit{total cost}, \textit{payment}, and \textit{cost of resource adjustment}. The \textit{payment} encompasses costs related to resource procurement, adjustment, as well as service placement (download, installation), and storage.  Notably, in the dynamic model, resource adjustment cost is included as part of the overall payment, whereas the static model does not incur costs associated with resource adjustment.

\noindent
\textit{\textbf{1) Impacts of cost parameters:}} 
Figs. \ref{fig: f_cost}--\ref{fig: f_payment} provide a comparative analysis of three schemes by varying cost parameters associated with service installation and download decisions. Generally, the total cost for all schemes increases with higher values of $\Psi_f$. Notably, the \textit{D-ARO} model outperforms the \textit{S-ARO} model. The \textit{S-ARO} model determines service placement decisions based on worst-case scenarios during planning, with no flexibility for adjustments during the operational phase. In contrast, our proposed model allows dynamic selection of the optimal service download location based on users' requests. This advantage is particularly evident when there are no available ENs with installed service at $t = 0$, requiring the SP to download service from the remote cloud. Consequently, the \textit{S-ARO} model may allocate excess resources, even when those ENs may not be immediately needed. Beyond certain $\Psi_f$ values, a saturation effect is observed, where higher $\Psi$ values make $h$ approach $h_0$, indicating that the cost of downloading services from nearby ENs becomes comparable to downloading from the cloud. Consequently, the advantage of dynamic service placement is less pronounced in such scenarios.
\begin{figure}[h!]
	\subfigure[Total cost: varying $h$]{
	  \includegraphics[width=0.245\textwidth,height=0.10\textheight]{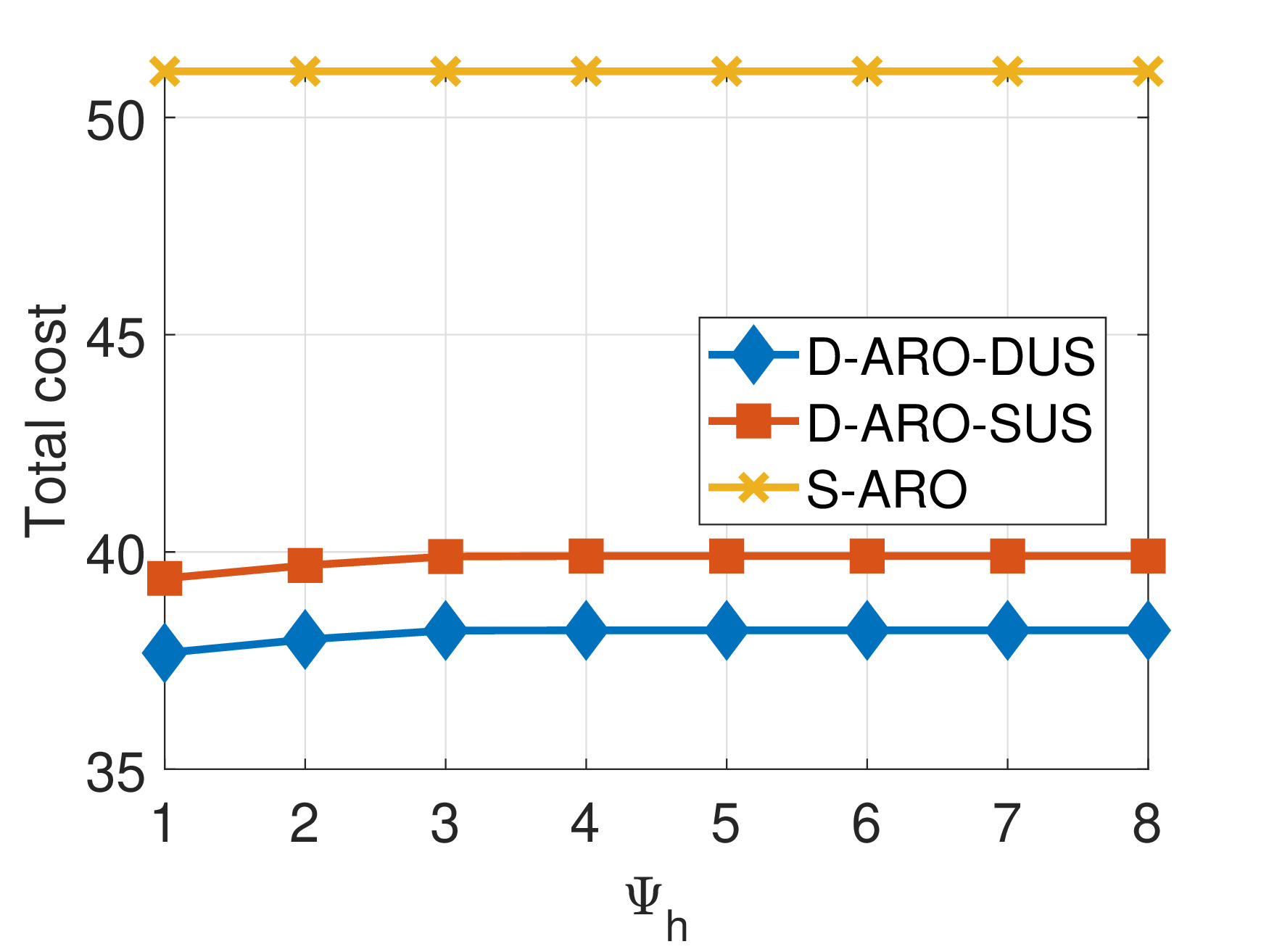}
	    \label{fig: h_cost}
	}   \hspace*{-2.1em} 
		 \subfigure[Payment: varying $h$]{
	     \includegraphics[width=0.245\textwidth,height=0.10\textheight]{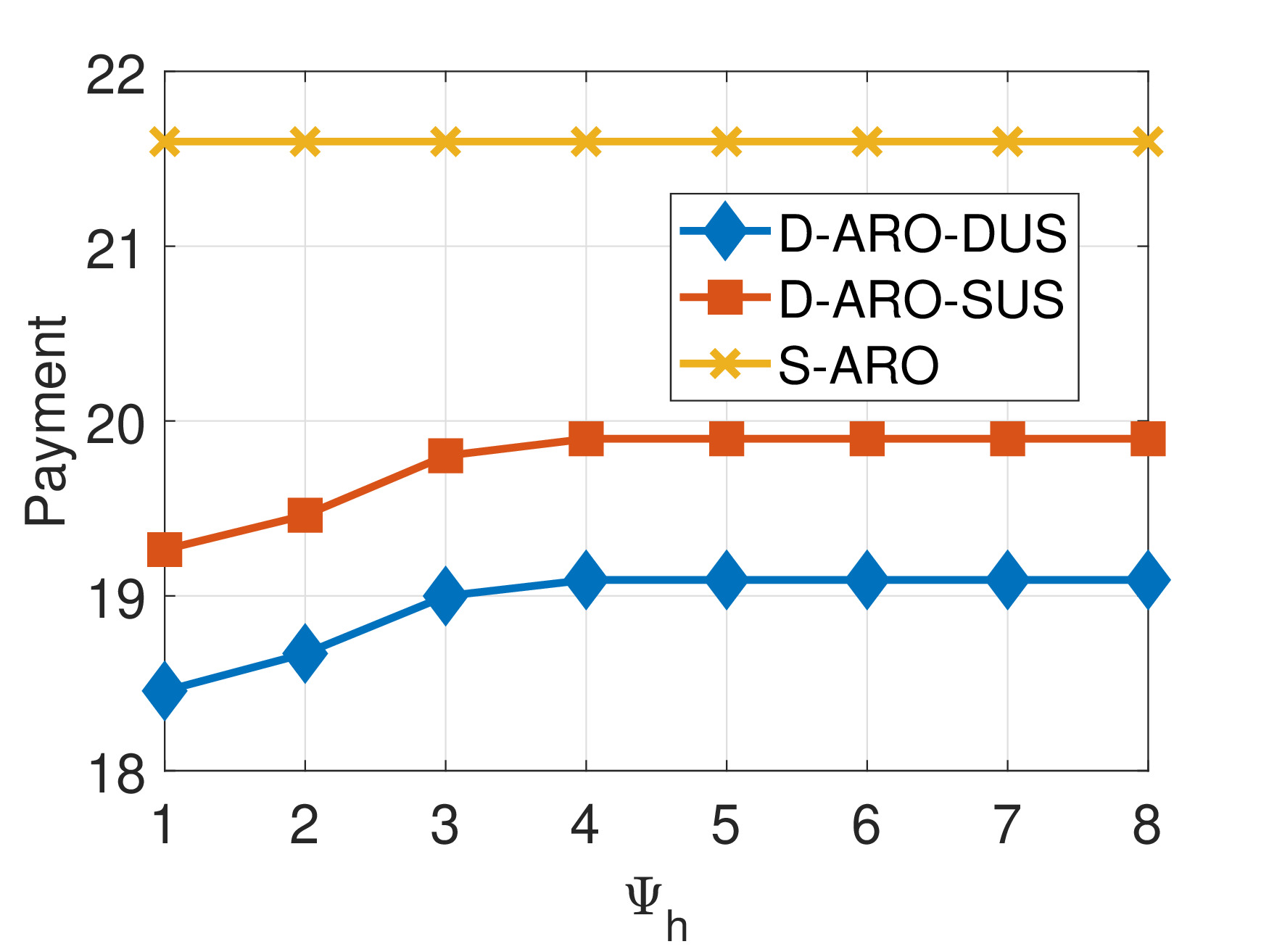}
	     \label{fig: h_payment}
	}  \vspace{-0.3cm}
	\subfigure[Total cost: varying $f$]{
	     \includegraphics[width=0.245\textwidth,height=0.10\textheight]{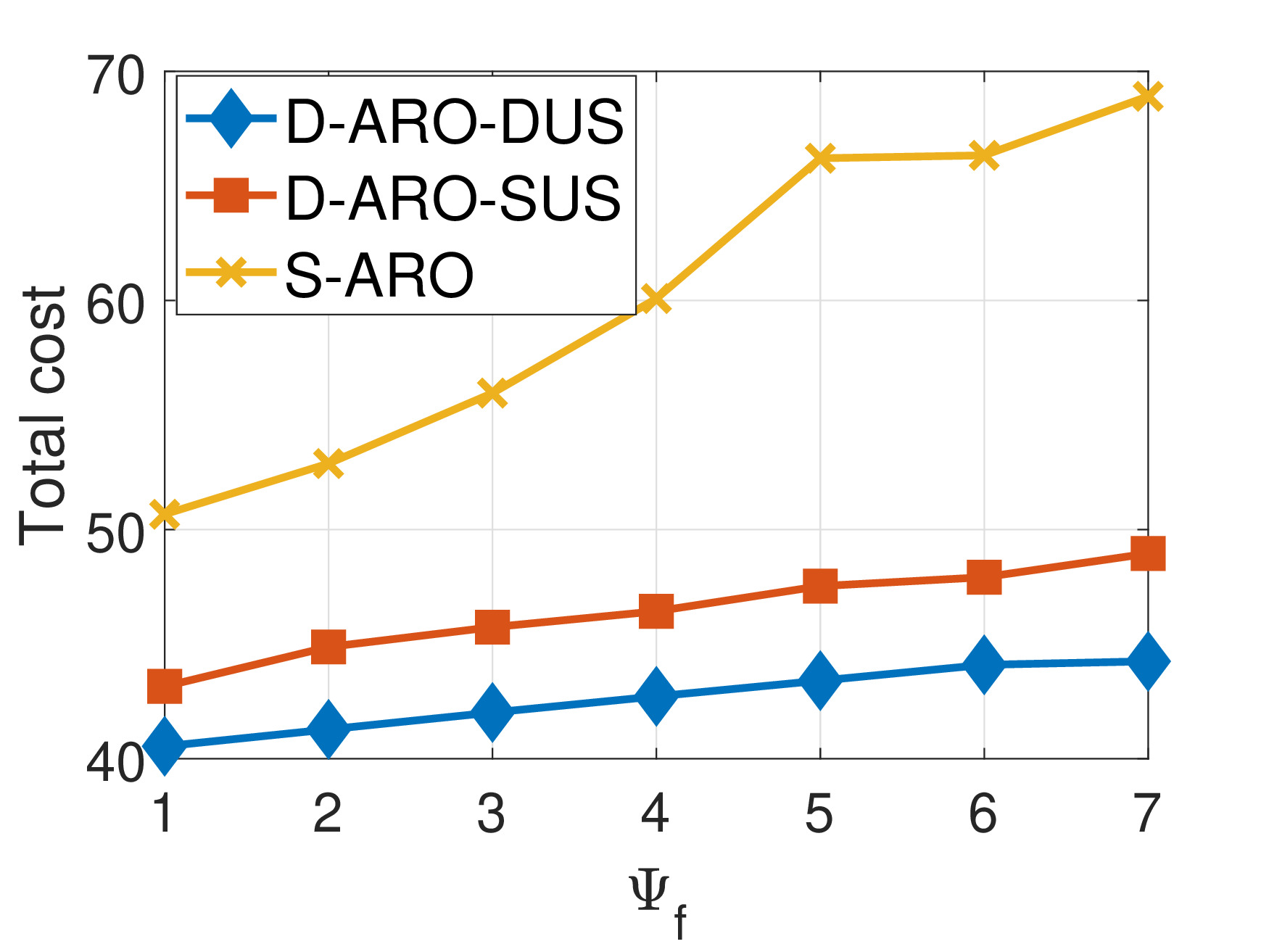}
	     \label{fig: f_cost}
	}  \hspace*{-2.1em}
	\subfigure[Payment: varying $f$]{
	     \includegraphics[width=0.245\textwidth,height=0.10\textheight]{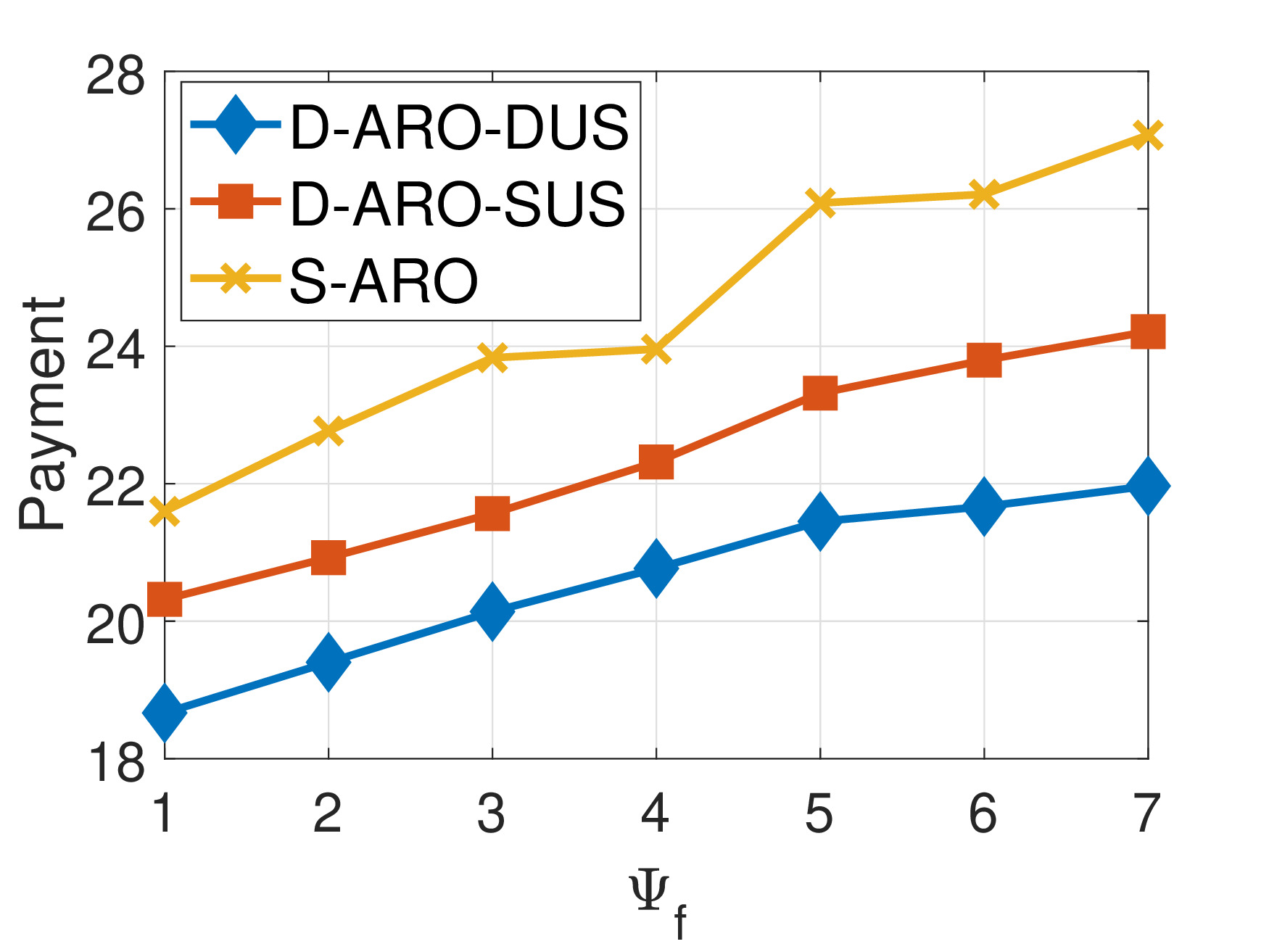}
	     \label{fig: f_payment}
	}  
	\caption{The impact of the cost parameters}
	\vspace{-0.2cm}
\end{figure} 

Unlike the static model, our dynamic model allows the SP to postpone immediate decisions on service download locations. This increased flexibility empowers the SP to exploit favorable conditions, such as lower $h$ costs, and choose to download the service from nearby ENs. When comparing dynamic models, the \textit{D-ARO-DUS} model outperforms the \textit{D-ARO-SUS}, as evident in Figs. \ref{fig: h_payment}--\ref{fig: f_payment}. This improved performance can be attributed to the dynamic uncertainty set's ability to effectively capture spatial-temporal dynamics, leading to less conservative outcomes than those produced using static uncertainty sets.


Figs. \ref{fig: varying_p_model_cost}--\ref{fig: varying_e_adjust} highlight the advantages of resource adjustment in our dynamic model compared to the \textit{S-ARO} model. Considering both long-term resource reservation and short-term resource adjustment, our dynamic model achieves significantly lower resource procurement costs than 
the static model. The interplay among the price parameters $(\mathbf{p}^{t}$, $\mathbf{e}^{t}$, $\mathbf{a}^{t})$ crucially influences the SP's decisions and the resulting resource procurement costs. Specifically, the reserved price $\bp$ is lower than the on-spot price $\be$, and the retail price $\ba$ should be lower than the reserved price $\bp$ (i.e., $\ba \leq \bp \leq \be$). Figs. \ref{fig: varying_p_model_cost}--\ref{fig: varying_p_model_payment} show that when the on-spot price $(\mathbf{e}^{t})$ is significantly higher than the reserved price $(\mathbf{p}^{t})$, the SP leverages the lower reserved price since there is no incentive to purchase additional resources during the operational stage. Conversely, when the on-spot price is close to the reserved price, the SP may not necessarily reserve sufficient resources in advance. This adaptive approach allows the SP to respond to changing market conditions and optimize resource procurement decisions based on the current on-spot price, showcasing the benefits of dynamic resource adjustment. Another interesting phenomenon in Figs. \ref{fig: varying_p_model_cost} and \ref{fig: varying_p_model_payment} is that \textit{S-ARO} model initially increases and then decreases even lower than the dynamic model. The initial increase indicates the SP is still willing to place the service onto ENs. However, after the resource reservation cost becomes high enough, the SP would rather place the service and procure all resources from the remote cloud, leading to a decrease in the flat on payment but a significant increase in the total cost. An intriguing observation from Figs. \ref{fig: varying_p_model_cost}--\ref{fig: varying_p_model_payment} is the initial rise and subsequent fall in costs for the \textit{S-ARO} model, with costs eventually dipping below those of dynamic models. The initial increase suggests that the SP is more inclined to place the service onto ENs. However, as the resource reservation cost escalates, the SP tends to shift towards placing the service and procuring all resources from the remote cloud. This strategic shift results in a decrease and leveling off in payment costs, but it also leads to a notable rise in the total cost. This trend reflects the SP' strategy in response to changing costs, yet notably overlooks the aspect of user experience. This observation underscores the advantages of dynamic models in effectively balancing cost considerations with user experience. 

\vspace{-0.2cm}
\begin{figure}[h!]
	\subfigure[Total cost: varying $p$]{
		\includegraphics[width=0.246\textwidth,height=0.10\textheight]{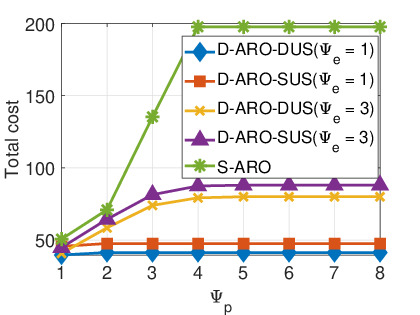}
	    \label{fig: varying_p_model_cost}
	}   \hspace*{-2.1em} 
		 \subfigure[Payment: varying $p$]{
	    \includegraphics[width=0.246\textwidth,height=0.10\textheight]{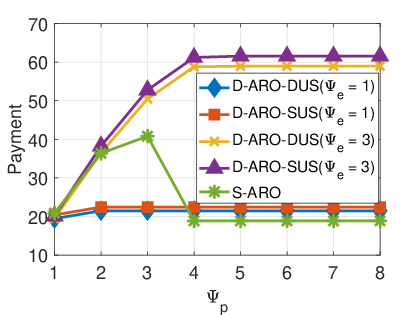}
	     \label{fig: varying_p_model_payment}
	}  \vspace{-0.3cm}
 	\subfigure[Resource adjustment: varying $p$]{
	    \includegraphics[width=0.246\textwidth,height=0.10\textheight]{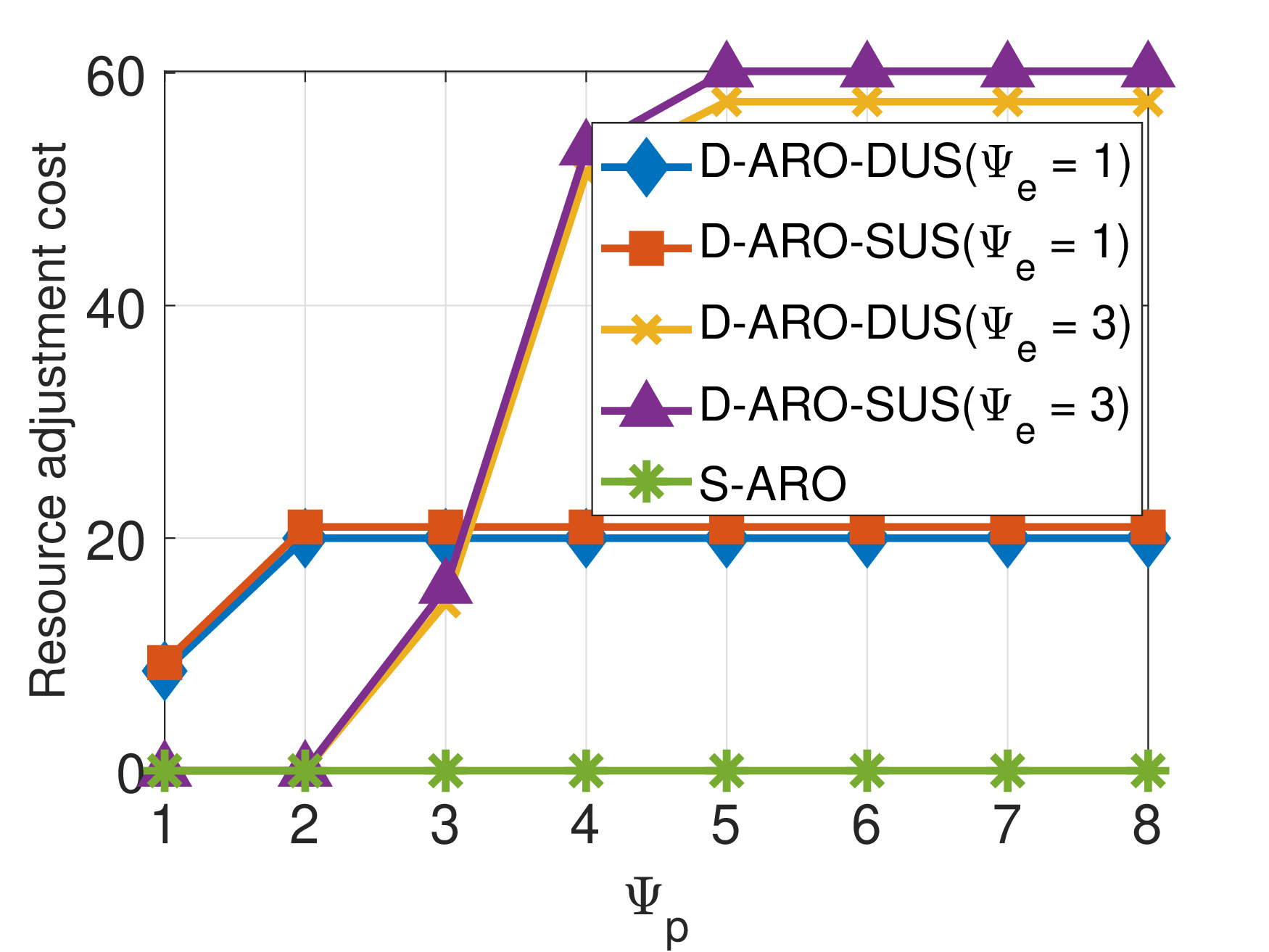}
	     \label{fig: varying_P_model_adjust}
	} \hspace*{-2.1em} 
         \subfigure[Resource adjustment: varying $e$]{
	    \includegraphics[width=0.246\textwidth,height=0.10\textheight]{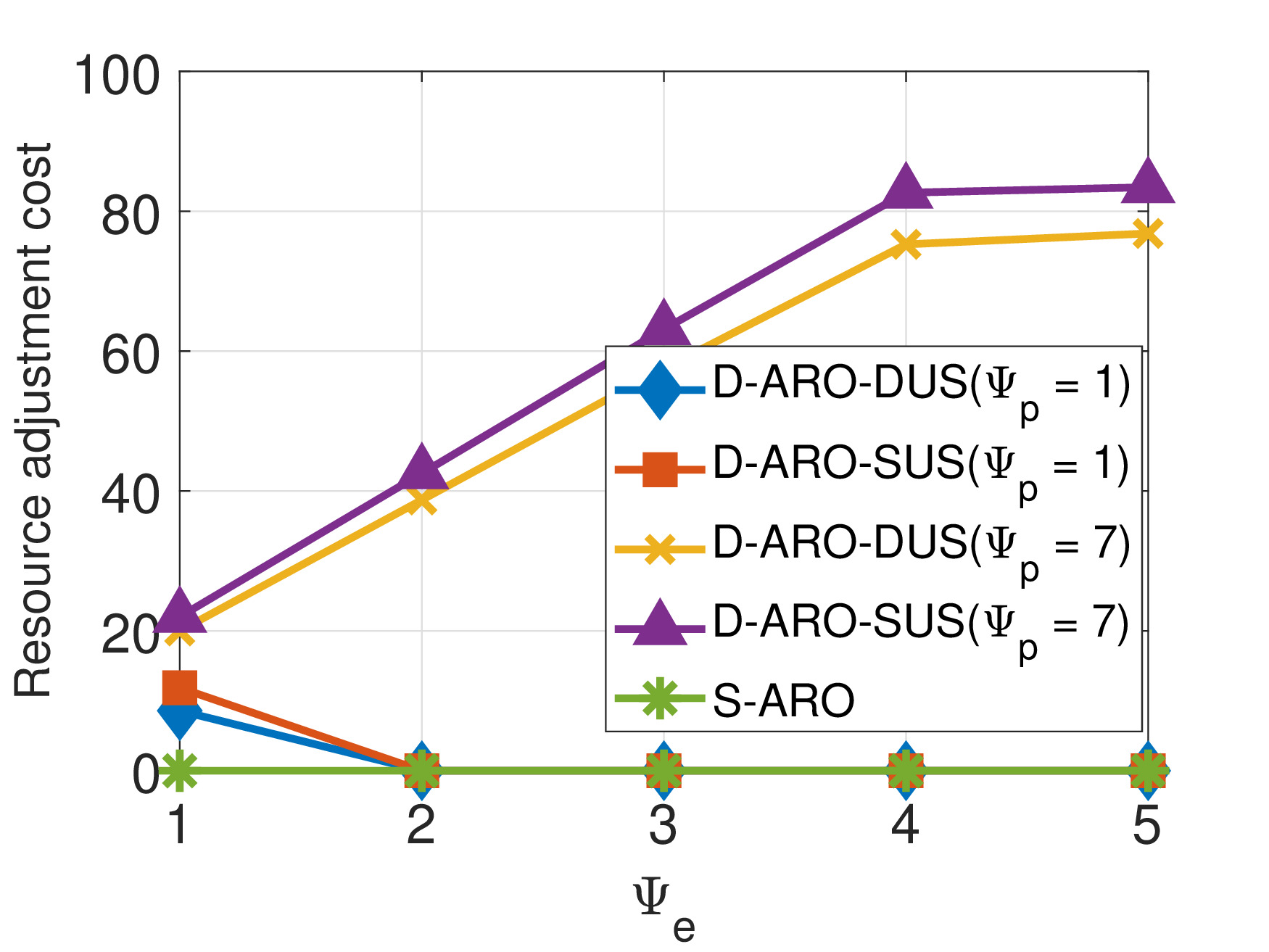}
	     \label{fig: varying_e_adjust}
	} 
	\caption{The impacts of cost parameter}
		\vspace{-0.2cm}
\end{figure} 

Figs. \ref{fig: varying_P_model_adjust} - \ref{fig: varying_e_adjust} illustrate that the \textit{S-ARO} model incurs no cost for resource adjustment, as the SP is required to reserve all necessary resources prior to the operational stage. Notably, even when the on-spot price $e$ is similar to the reserved price $p$, the SP continues to favor making decisions dynamically. However, when the on-spot price ($e$) significantly exceeds the reserved price ($p$), this incentive for adjusting resources may diminish. Consequently, the cost associated with resource adjustment in such scenarios begins to align more closely with that of the static model, as depicted in Fig. \ref{fig: varying_e_adjust}.

\noindent
\textit{\textbf{2) Impacts of uncertain parameters:}} 
$\alpha$ represents the maximum forecast error, defined as $\alpha = \max \frac{|\bar{\lambda}_i^{t} - \lambda_i^{t}|}{\lambda_i^{t}}$. 
Figs. \ref{fig: varying_alpha_cost} - \ref{fig: varying_alpha_payment} 
explore the potential for cost reduction through the dynamic uncertainty set. We maintain the settings of our proposed model and manually adjust the demand deviation for the \textit{D-ARO-SUS} and \textit{S-ARO} models using $\alpha$, aiming to examine the performance disparity between \textit{D-ARO-DUS} and its benchmarks. By altering $\alpha$ across a spectrum of $[5\%, 35\%]$, we can observe that our proposed model achieves superior performance compared to the benchmarks when $\alpha$ is set at 10\%, and it performs similarly with $\alpha$ at 5\%, as highlighted in the red circle. This enhanced performance is attributed to the model's integration of spatial-temporal dynamics in the uncertainty set, effectively lessening the conservativeness of the solution. A higher $\alpha$ indicates more significant demand variability. In contrast, the \textit{S-ARO} model accounts for the worst-case scenario (such as peak hours) from the outset of the planning period. This leads to a more conservative strategy to reliably manage peak-hour demands, even though the actual demand might not escalate to that extreme scenario.

\vspace{-0.2cm}
\begin{figure}[h!]
	\subfigure[Total cost: varying $\alpha$]{
    \includegraphics[width=0.246\textwidth,height=0.10\textheight]{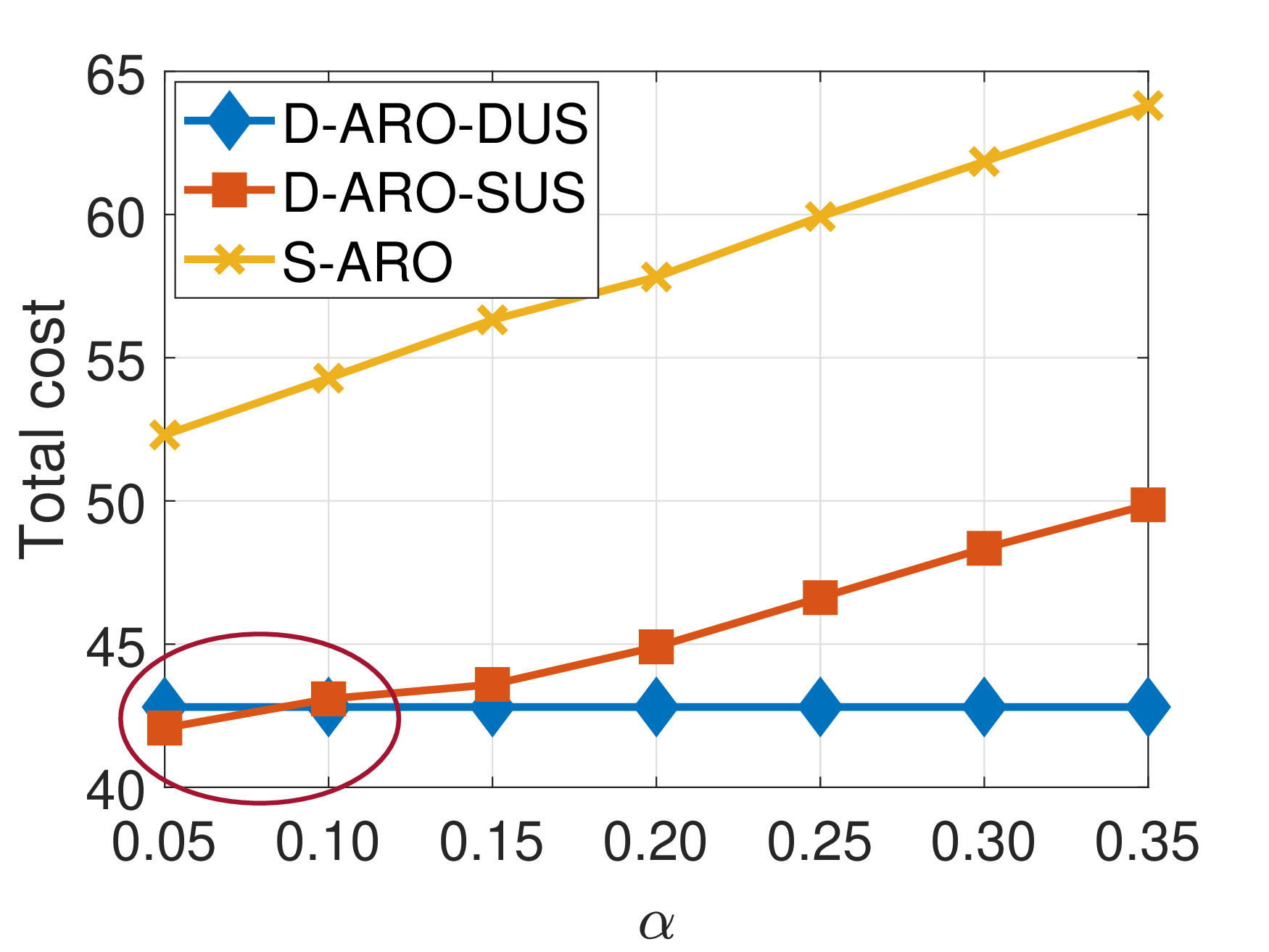}
	    \label{fig: varying_alpha_cost}
	}   \hspace*{-2.1em} 
		 \subfigure[Payment: varying $\alpha$]{
	    \includegraphics[width=0.246\textwidth,height=0.10\textheight]{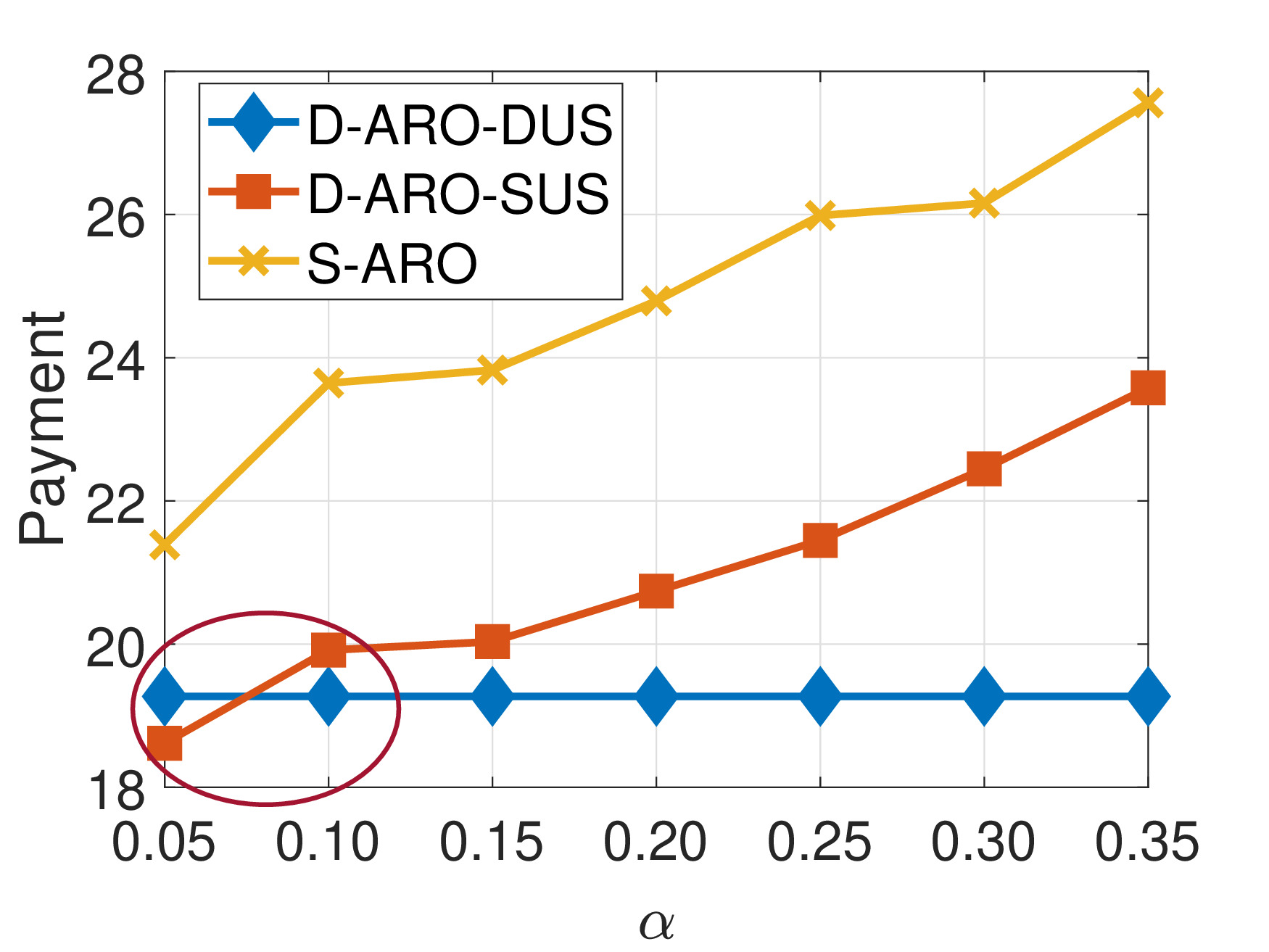}
	     \label{fig: varying_alpha_payment}
	}  \vspace{-0.2cm}
	\caption{The impacts of uncertain parameters}
		\vspace{-0.3cm}
\end{figure}

\vspace{-0.5cm}
\section{Conclusions}
\label{summary}

This paper introduced a new two-stage multi-period dynamic edge service placement framework designed to optimize edge service provisioning and allocation decisions.  
The framework 
operates in two stages: the first stage emphasizes resource reservation, while the second stage addresses dynamic service placement and workload allocation as uncertainties are unveiled. 
The proposed model integrates spatial-temporal correlation into a multi-period dynamic uncertainty set,  resulting in less conservative solutions. 
 Moreover, the model permits dynamic resource adjustments and service placement during real-time operation, enhancing adaptability to fluctuating demands. To address the proposed 
robust problem with integer recourse decisions, we developed a novel iterative \textit{ROD} algorithm that decomposes the problem into inner and outer-level components. 
Numerical analyses underscore the advantages of incorporating spatial-temporal correlations of uncertain demand, and the benefits of dynamic service placement and resource adjustments.
\vspace{-0.3cm}

\bibliographystyle{IEEEtran}

\newpage
\section{Appendix}
\setcounter{page}{1}



\subsection{Static ARO (S-ARO)}
\label{S_ARO_section}
We consider static adjustable robust optimization (S-ARO) over a time horizon, which is the proposed model from previous work in \cite{Duong_iot}. In this model, If SPs want to place the service onto some ENs that do not have the service installed at the beginning of the scheduling horizon, they need to decide to download the requested applications from either ENs or remote cloud with installed applications. SPs also purchase the corresponding computing resource based on the forecast traffic demand and capacity on each EN. Given the service placement and resource procurement decision, SPs cannot change any service placement decisions during the scheduling horizon. Once the service is placed at the beginning of the scheduling horizon, SP does not need to pay the fees resulting from service placement for the rest of time period until they delete this application for further operations. The problem formulation of benchmark (\textit{S-ARO}) in $\mathcal{P}_2$ is shown as follows:
\begin{subequations}
\label{S-ARO}
\begin{align}
& \underset{z,s}{\text{min}} ~ \delta \bigg\{ \sum_{j,t} p_{j}^{t} s_{j}^{t} + p_{0}^{t} s_{0}^{t} \bigg\} + \sum_{j,t} \varsigma_{j}^{t}z_{j}^{t} \nonumber \\
& + \underset{\lambda \in \mathcal{D}_{1}}{\text{max}} \underset{x,y}{\text{min}} \sum_{i,t} c_{i,0} x_{i,0}^{t} + \sum_{i,j} c_{i,j} x_{i,j}^{t}  \nonumber\\
& \text{s.t.} ~~ x_{i,0}^{t} + \sum_i x_{i,j}^{t} \geq \lambda_{i}^{t}, \: \forall i,t \\
& w \sum_{i} x_{i,j}^{t} \leq s_j^{t} z_j^{t}, ~ \forall j,t \\
& w \sum_{i} x_{i,0}^{t} \leq  s_0^{t}, ~ \forall t \\
& z_{j}^{t} = z_{j}^{t - 1}; ~ s_j^{t} = s_j^{t-1}, \forall j,t,~ \forall j,t; ~~ s_0^{t} = s_0^{t-1}, \forall t\\
& x_{i,j}^{t} = x_{i,j}^{t - 1}, \forall i,j,t;  x_{i,0}^{t} =  x_{i,0}^{t - 1}, \forall i,t\\
& z_{j}^{t} \in \{0,1\}, \forall j,t; ~ x_{i,0}^{t} \geq 0, \forall i,t ;~ x_{i,j}^{t}\geq 0,  \forall i,j,t\\
& \lambda_{i}^{t} \in \mathcal{D}_{1}(\lambda_{i}^{t}), \: \forall i,t,
\end{align}
\end{subequations}
where $\varsigma_{j}^{t} = f_j^t(1 - z_j^{t-1})z_j^{t-1} + f_{j,t}^{\sf s}$. 

\end{document}